\tikzset{inner sep=0pt,
	root/.style={circle,draw,minimum size=7pt,thick},
	fatroot/.style={circle,draw,minimum size=10pt,thick},
	short root/.style={circle,fill,minimum size=7pt},
	doublearrow/.style={postaction={decorate},
		decoration={markings,mark=at position .7
			with {\arrow{angle 60}}},double distance=3pt,thick}
}
\newtheorem{proposition}{Proposition}
\newtheorem{theorem}[proposition]{Theorem}
\newtheorem{lemma}[proposition]{Lemma}
\newtheorem{corollary}[proposition]{Corollary}
\theoremstyle{definition}
\newtheorem{definition}[proposition]{Definition}
\theoremstyle{remark}
\DeclareMathOperator{\GL}{GL}
\DeclareMathOperator{\Gal}{Gal}
\DeclareMathOperator{\Sym}{Sym}
\DeclareMathOperator{\Pic}{Pic}
\renewcommand{\P}{\mathbb{P}}
\renewcommand{\O}{\mathcal{O}}
\newcommand{\HH}{\mathrm{H}}
\newcommand{\defeq}{\vcentcolon=}
\newcommand{\nc}[2]{\newcommand{#1}{#2}}
\nc{\mc}{\mathcal}
\newcommand{\renc}[2]{\renewcommand{#1}{#2}}
\renc{\C}{\mathbb{C}}
\nc{\ol}{\overline}
\nc{\on}{\operatorname}
\newcommand{\Q}{\mathbb{Q}}
\newcommand{\Z}{\mathbb{Z}}
\newcommand{\F}{\mathbb{F}}
\DeclareMathOperator{\Sel}{Sel}
\DeclareSymbolFont{cyrletters}{OT2}{wncyr}{m}{n}
\DeclareMathSymbol{\Sha}{\mathalpha}{cyrletters}{"58}
\newcommand{\extp}{\@ifnextchar^\@extp{\@extp^{\,}}}
\def\@extp^#1{\mathop{\bigwedge\nolimits^{\!#1}}}
\newcommand{\overbar}[1]{\mkern 1.5mu\overline{\mkern-1.5mu#1\mkern-1.5mu}\mkern 1.5mu}
\def\@tocline#1#2#3#4#5#6#7{\relax
  \ifnum #1>\c@tocdepth 
  \else
    \par \addpenalty\@secpenalty\addvspace{#2}%
    \begingroup \hyphenpenalty\@M
    \@ifempty{#4}{%
      \@tempdima\csname r@tocindent\number#1\endcsname\relax
    }{%
      \@tempdima#4\relax
    }%
    \parindent\z@ \leftskip#3\relax \advance\leftskip\@tempdima\relax
    \rightskip\@pnumwidth plus4em \parfillskip-\@pnumwidth
    #5\leavevmode\hskip-\@tempdima
      \ifcase #1
       \or\or \hskip 1em \or \hskip 2em \else \hskip 3em \fi%
      #6\nobreak\relax
    \dotfill\hbox to\@pnumwidth{\@tocpagenum{#7}}\par
    \nobreak
    \endgroup
  \fi}
\tikzset{
  symbol/.style={
    draw=none,
    every to/.append style={
      edge node={node [sloped, allow upside down, auto=false]{$#1$}}}
  }
}
\newcommand{\height}{\mathrm{ht}}
\title{\vspace*{-0.75in}A positive proportion of monic odd-degree hyperelliptic curves of genus $g \geq 4$ have no unexpected quadratic points}
\author{Jef Laga and Ashvin A. Swaminathan}
\begin{document}


\maketitle

\vspace*{-0.2in}
\begin{abstract}
    Let $\mathcal{F}_g$ be the family of monic odd-degree hyperelliptic curves of genus $g$ over $\Q$.
    Poonen and Stoll have shown that for every $g \geq 3$, a positive proportion of curves in $\mathcal{F}_g$ have no rational points except the point at infinity. 
    In this note, we prove the analogue for quadratic points: for each $g\geq 4$, a positive proportion of curves in $\mathcal{F}_g$ have no points defined over quadratic extensions except those that arise by pulling back rational points from $\mathbb{P}^1$.                      

    \end{abstract}


\section{Introduction}

Let $C/\Q$ be a smooth projective curve of genus $g\geq 2$.
Faltings has shown that the set of rational points $C(\Q)$ is finite~\cite[Satz~7]{MR718935}. It is therefore natural to ask how the quantity $\#C(\Q)$ is distributed as $C$ varies in a family of curves. Many authors (see, e.g.,~\cite[Conjecture~2.2]{MR2029869},~\cite{MR2293593},~\cite[Conjecture~1.3(ii)]{MR2340106},~\cite[Conjecture~1]{stoll2009average}) have formulated heuristics or conjectures suggesting that in a family of curves $C$ of genus $g \geq 2$, the quantity $\#C(\Q)$ is very often as small as possible: i.e., most members ought to possess no rational points beyond those that exist generically among all members in the family. Put rather simply, most curves in a family should be as ``pointless'' as can be.

This pithy principle has been proven (or partially proven) for several families of curves of interest; see, e.g.,~\cite{thesource},~\cite{MR3600041} (cf.~\cite[Remark~3.3]{BSSpreprint}),~\cite{MR4258170},~\cite{MR3719247},~\cite{MR4471040}.
Of particular relevance to this note is the major result of Poonen and Stoll \cite{PoonenStoll-Mosthyperellipticnorational}, which concerns the family of monic odd-degree hyperelliptic curves
\begin{align*}
\mathcal{F}_g \defeq \{C\colon y^2 = f(x) = x^{2g+1} + a_1x^{2g} + \dots +a_{2g+1} \mid a_i \in \Z,\, \text{disc}(f)\neq 0\},
\end{align*}
ordered by the height $\height(C) = \max |a_i|^{1/i}$. Here $C$ denotes the unique smooth projective curve with affine equation $y^2 = f(x)$; it has a unique point at infinity $P_{\infty}$, which is $\Q$-rational.
If $g\geq 3$, Poonen and Stoll showed that a positive proportion of curves $C \in \mathcal{F}_g$ satisfy $C(\Q) = \{P_{\infty}\}$, and that this proportion tends to $1$ exponentially fast as $g\rightarrow \infty$.

In this note, we consider quadratic points on curves in the family $\mc{F}_g$. 
Recall that a point $P\in C(\ol{\Q})$ is quadratic if its field of definition $\Q(P)$ is a quadratic extension of $\Q$.
When $C$ is hyperelliptic, it visibly has infinitely many quadratic points: since $C$ has a degree-$2$ map $\pi \colon C \to \mathbb{P}^1$ defined over $\Q$, for all but finitely many points $P \in \mathbb{P}^1(\Q)$ the preimage $\pi^{-1}(P)$ consists of a Galois-conjugate pair of quadratic points.
Explicitly, if $P = (x:1)\in \P^1(\Q)$ then $\pi^{-1}(P)= (x,\pm \sqrt{f(x)})\in C(\ol{\Q})$.
It follows from a more general result of Faltings \cite[Theorem 1 \& Example 4.5]{Faltings-diophantineapproximationabelianvars} that if $C/\Q$ is a hyperelliptic curve of genus $g \geq 4$, then $C$ has only finitely many quadratic points not arising as pullbacks from \mbox{rational points on $\P^1$.}

\subsection{Results}\label{subsec: results}
If $C\in \mathcal{F}_g$, we say a rational point $P$ on $C$ is \emph{expected} if $P = P_{\infty}$, and unexpected otherwise. 
We say a quadratic point $P$ on $C$ is \emph{expected} if it maps to a rational point on $\P^1(\Q)$, and unexpected otherwise.
Our main result shows that when $g\geq 4$, a positive proportion of curves in $\mathcal{F}_g$ have no unexpected rational or quadratic points at all.
Let $\mathcal{F}_{g,X} \defeq \{C \in \mc{F}_g \mid \on{ht}(C) < X\}$, and define the lower density
\begin{align*}
    \delta_{g} \defeq \liminf_{X\rightarrow \infty} \frac{\#\{C\in \mathcal{F}_{g,X} \mid C \text{ has no unexpected rational points or quadratic points}  \}  }{\#\mathcal{F}_{g,X}}.
\end{align*} 
\begin{theorem} \label{thm-main}
    Let $g \geq 4$. When ordered by height, a positive proportion of monic odd hyperelliptic curves $C/\Q$ of genus $g$ possess no unexpected rational points or quadratic points.
    In other words, $\delta_{g}>0$.
\end{theorem}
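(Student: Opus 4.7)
My plan is to adapt the Poonen--Stoll strategy, replacing the curve $C$ with the Abel--Jacobi image $W_2 \subseteq J$ of the symmetric square, where $J = \Jac(C)$ and $\phi\colon \Sym^2 C \to J$ is given by $\{P,Q\} \mapsto [P+Q - 2P_\infty]$. Because $P_\infty$ is a Weierstrass point, $2P_\infty$ lies in the hyperelliptic class $g^1_2$, so the linear system $|2P_\infty|$ forms a $\P^1 \subset \Sym^2 C$ that $\phi$ contracts to the origin $0 \in J$. Its $\Q$-points, parametrized by pullbacks along $\pi\colon C\to\P^1$ of $\P^1(\Q)$, are precisely the expected quadratic points on $C$ (together with $\{P_\infty,P_\infty\}$). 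Consequently, an unexpected quadratic point $P$ on $C$ produces a nonzero class $[P+P^\sigma-2P_\infty]\in W_2(\Q)$, and under the Poonen--Stoll conclusion $C(\Q)=\{P_\infty\}$ the theorem reduces to showing that $W_2(\Q)=\{0\}$ for a positive proportion of $C\in\mathcal{F}_g$.

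First, the Bhargava--Gross theorem that the average size of $\Sel_2(J)$ over $\mathcal{F}_g$ equals $3$, combined with a Markov-type estimate, extracts a positive-proportion subfamily of $C$ with $r\defeq\rk J(\Q)\le 1$. Intersecting with the Poonen--Stoll positive-proportion set where $C(\Q)=\{P_\infty\}$ retains positive proportion. For such a $C$, a Chabauty--Coleman argument applies to $W_2$: at a suitable prime $p$, the $p$-adic closure of $J(\Q)$ in $J(\Q_p)$ sits inside a $p$-adic Lie subgroup $L$ of dimension at most $r\le 1$, and its intersection with the $p$-adic analytic surface $W_2(\Q_p)$ has expected dimension $r+\dim W_2 -\dim J\le 3-g$, which is strictly negative exactly when $g\ge 4$. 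Thus $W_2(\Q)\subseteq W_2(\Q_p)\cap L$ is finite, cut out by $g-r\ge 3$ Coleman--Chabauty $1$-forms on $W_2$.

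The main obstacle is to promote finiteness to the identity $W_2(\Q)=\{0\}$ for a positive-proportion subfamily. Following Poonen--Stoll, I would use the Bhargava--Gross orbit parametrization of $\Sel_2(J)$ (via pencils of quadrics, or equivalently self-adjoint operators) to make the Coleman--Chabauty $1$-forms on $W_2$ explicit at a small prime $p$, likely $p=2$, and then impose a local condition forcing their common zero set to equal $\{0\}$. Compared to the curve case, the surface geometry is noticeably more delicate: since $W_2$ is singular at $0$ (the contraction point of the exceptional $\P^1$), one must work on the resolution $\Sym^2 C$ and observe that the pulled-back Chabauty forms vanish identically along this exceptional curve. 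The crux is then a two-variable Newton-polygon analysis of the leading nonvanishing factor of each form in a $p$-adic neighborhood of the exceptional divisor, showing that for a positive density of $2$-Selmer inputs the joint zero locus on $(\Sym^2 C)(\Q_p)$ is supported on the exceptional fiber alone. Extending Poonen--Stoll's one-variable local computation to this genuinely two-dimensional setting is where essentially all the technical work will lie.
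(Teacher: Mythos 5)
Your identification of the right geometric object --- the Abel--Jacobi image $W_2$ of $\Sym^2 C$ inside $J$, with the contracted $\P^1 = |2P_\infty|$ accounting for precisely the expected quadratic points --- matches the paper's setup (there denoted $X$), and the constraint $g\geq 4$ appears for essentially the same dimension-count. But the route you sketch from there both diverges from the paper's and contains gaps, and the main one is a difficulty the paper itself explicitly flags as unsolved.

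First, intersecting a Bhargava--Gross positive-density set (rank $\leq 1$) with the Poonen--Stoll positive-density set ($C(\Q)=\{P_\infty\}$) does not automatically yield a positive-density set, and you do not establish compatibility; the paper sidesteps this by never invoking a Mordell--Weil rank bound at all, instead generalizing Poonen--Stoll's Selmer-group Chabauty to arbitrary subvarieties of abelian varieties (Proposition~\ref{proposition: Chabauty arbitrary subvarieties}), whose hypothesis is the disjointness of $\P\sigma(\Sel_p J)$ and $\rho\log(X(\Q_p))$ in $\P^{g-1}(\F_p)$ rather than a rank inequality. Second, Selmer orbits do not hand you explicit annihilating differentials on $J$, so ``making the Coleman--Chabauty $1$-forms explicit via the orbit parametrization'' is not an available mechanism; the Selmer-Chabauty framework is designed precisely to avoid constructing such forms, working instead with the $\bmod\ p$ comparison of log-images. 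Third, and most seriously, the two-variable Newton-polygon analysis you propose near the exceptional $\P^1$ is exactly what the authors identify as the obstruction to a full density-$1$ result: the one-variable Weierstrass preparation used in Poonen--Stoll \S3.2, \S5 has no multivariable analogue over $\Q_p$, and the paper states plainly that ``new ideas are needed here.'' The paper circumvents this by restricting to an explicit positive-density (exponentially small in $g$) subfamily $\mc{F}_g^{(2)}$ of curves with good reduction at $2$ and trivial $2$-torsion over $\Q_2$ and $\F_2$, for which $(\Sym^2\mc{C})(\F_2)$ has only three points; $\rho\log(X(\Q_2))$ is then computed residue disk by residue disk and shown to have at most $5$ elements (Proposition~\ref{prop-boundsym2}), after which Bhargava--Gross equidistribution of the image of $\Sel_2 J$ in $J(\Q_2)/2J(\Q_2)$ (not merely the average size of $\Sel_2 J$) closes the argument. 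You have correctly located the central difficulty, but your proposal does not resolve it.
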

Theorem \ref{thm-main} is the first unconditional result of its kind to effectivize Faltings' theorem on the finiteness of unexpected quadratic points in a ``large'' family of curves. 

\subsection{Methods}
The proof of Theorem \ref{thm-main} relies on partially generalizing the method of Poonen and Stoll \cite{PoonenStoll-Mosthyperellipticnorational} (later dubbed ``Selmer group Chabauty'' in \cite{Stoll-ChabautywithoutMordellWeil}) to symmetric squares of curves.
Roughly speaking, we proceed in three steps as follows:
\begin{enumerate}
    \item Given a subvariety $X$ of a $g$-dimensional abelian variety $J/\Q$ and a prime number $p$, we prove a criterion guaranteeing that $X(\Q_p) \cap J(\Q)$ is ``small'', under suitable assumptions concerning the disjointness of two sets: (1) the image of $\Sel_p J$ in $J(\Q_p)/pJ(\Q_p)$, and (2) the image of $X(\Q_p)$ under the re-scale and reduce mod-$p$ logarithm map $\rho\log \colon X(\Q_p) \dashrightarrow \P^{g-1}(\F_p)$; see \S\ref{sec-selcha} --- specifically, Proposition \ref{proposition: Chabauty arbitrary subvarieties}. 
    \item Given a curve $C\in \mathcal{F}_g$, we wish to apply the criterion to $J = $ the Jacobian of $C$, $X=$ the image $\Sym^2 C\rightarrow J$, and $p=2$. This leads to a criterion for $C$ to have no unexpected rational or quadratic points; see \S\ref{sec-crit} --- specifically, Corollary \ref{corollary: chabauty criterion for no unexpected points}.
    \item We show that the latter criterion is satisfied for a positive proportion of $C\in \mathcal{F}_g$ when $g\geq 4$ using two inputs: (1) an equidistribution theorem concerning the image of $\Sel_2 J \rightarrow J(\Q_2)/2J(\Q_2)$ as $C$ varies in $\mathcal{F}_g$ due to Bhargava--Gross \cite[Theorem~12.4]{bhargavagross-2selmer}, and (2) the construction of an explicit subfamily $\mc{F}_g^{(2)} \subset \mc{F}_g$ having positive density on which we can control the image $\rho\log (X(\Q_2))$; see \S\ref{sec-exp} --- specifically, Proposition~\ref{prop-boundsym2}.
\end{enumerate}
By calculating the density of $\mc{F}_g^{(2)}$, we obtain explicit lower bounds on the density $\delta_{g}$, proving that $\delta_{g} \geq 2^{-4g^2-6g-7}$; since the curves in $\mathcal{F}_g^{(2)}$ all have good reduction at $2$, the bound must be exponentially small in $g$: indeed, it is exceedingly rare for a hyperelliptic curve in short Weierstrass form to have good reduction at $2$.

If one would like to go beyond Theorem \ref{thm-main} and prove --- analogously to Poonen and Stoll --- that $\delta_g \rightarrow 1$ as $g\rightarrow \infty$, one would need to control the image of $\rho\log\colon X(\Q_2) \dashrightarrow \P^{g-1}(\F_2)$ for a density $1$ set of curves $C\in \mathcal{F}_g$, with arbitrary reduction type at $2$.
The methods of \cite[\S3.2, \S5]{PoonenStoll-Mosthyperellipticnorational} do not directly apply, due to the lack of the Weierstrass preparation theorem for power series over $\Q_p$ in more than one variable. 
It therefore seems that new ideas are needed here.

\subsection{Previous work}

  Using just their result that the average size of $\on{Sel}_2 J$ is at most $3$ in conjunction with an effective version of Chabauty's method, Bhargava and Gross show that a positive proportion of curves in $\mc{F}_g$ have at most $3$ rational points. It is natural to ask whether a similar strategy would work to control points of degree $d$. A version of Chabauty's method for symmetric powers of curves was developed by Siksek~\cite{Sik09}, and Park~\cite{Par16} used to tropical intersection theory to develop an effective version of Siksek's method. However, it was later discovered by Gunther and Morrow~\cite[Remark 5.6]{gunther2019irrational} that Park's work is missing a transversality hypothesis; while it may be feasible to verify this hypothesis for individual curves, it is currently unknown how to do so in a family of curves. Conditional on this transversality hypothesis, Gunther and Morrow prove that a positive proportion of curves in $\mc{F}_g$ have at most 24 unexpected quadratic points for each $g \geq 3$. Our main result, Theorem~\ref{thm-main}, unconditionally reduces their bound for quadratic points to zero when $g\geq 4$.

In a different direction, there are several papers in the literature where versions of Chabauty's method are used to obtain uniform bounds on the number of rational points on curves; here the bounds tend to depend on the genus as well as on the Mordell--Weil rank of the Jacobian. See, e.g., the works of Stoll~\cite{MR3908770} and Katz, Rabinoff, and Zureick-Brown~\cite{MR3566201}. Analogous uniform bounds were obtained for quadratic points by Vemulapalli and Wang~\cite{MR4309843}, but their results depend on the aforementioned work of Park~\cite{Par16} and are thus conditional. 

We end by mentioning the work of Caro and Pasten \cite{CaroPasten-chabautysurfaces}, which has consequences for quadratic points on non-hyperelliptic curves whose Jacobian has Mordell--Weil rank at most one.

\subsection{Acknowledgements}

JL thanks Dick Gross for interesting conversations that motivated us to work on this problem. We also thank
Manjul Bhargava, Jackson Morrow, Bjorn Poonen, Joe Rabinoff, Michael Stoll, Melanie Matchett Wood, and David Zureick-Brown
for helpful conversations, as well as the anonymous referee for their careful reading of the paper.
AS was supported by the National Science Foundation, under the Graduate Research Fellowship as well as Award No.~2202839.

\section{A generalization of \texorpdfstring{$p$}{p}-Selmer-group Chabauty} \label{sec-selcha}

In this section, we generalize the method of $p$-Selmer group Chabauty --- originally developed by Poonen and Stoll to control rational points on a curve embedded inside its Jacobian --- to arbitrary subvarieties of abelian varieties. Specifically, we prove Proposition~\ref{proposition: Chabauty arbitrary subvarieties}, which gives a criterion for a subvariety of an abelian variety to contain only very few rational points.

Let $J/\Q$ be a $g$-dimensional abelian variety, $X\subset J$ a closed subvariety, and $p$ a prime number. 
Let $\overbar{J(\Q)}\subset J(\Q_p)$ be the $p$-adic closure of $J(\Q)$ in $J(\Q_p)$.
Consider the following diagram (modeled on \cite[(6.1)]{PoonenStoll-Mosthyperellipticnorational}):
\begin{equation} \label{eq-tikz} \begin{tikzcd}
	{X(\Q)} && {X(\Q_p)} \\
	{J(\Q)} & {\overbar{J(\Q)}} & {J(\Q_p)} & {\Z_p^g} \\
	{J(\Q)/pJ(\Q)} & {\overbar{J(\Q)}/p\overbar{J(\Q)}} & {J(\Q_p)/pJ(\Q_p)} & {\mathbb{F}_p^g} & {\mathbb{P}^{g-1}(\mathbb{F}_p)} \\
	& {\Sel_p J}
	\arrow[hook, from=1-1, to=1-3]
	\arrow[hook, from=1-1, to=2-1]
	\arrow[hook, from=1-3, to=2-3]
	\arrow[hook, from=2-1, to=2-2]
	\arrow[two heads, from=2-1, to=3-1]
	\arrow[hook, from=2-2, to=2-3]
	\arrow[two heads, from=2-2, to=3-2]
	\arrow["\log", two heads, from=2-3, to=2-4]
	\arrow[two heads, from=2-3, to=3-3]
	\arrow[two heads, from=2-4, to=3-4]
	\arrow["\rho", dashed, from=2-4, to=3-5]
	\arrow["\mu", two heads, from=3-1, to=3-2]
	\arrow["\delta"', hook, from=3-1, to=4-2]
	\arrow[from=3-2, to=3-3]
	\arrow["{\log\otimes \mathbb{F}_p}", from=3-3, to=3-4]
	\arrow["{\mathbb{P}}", dashed, from=3-4, to=3-5]
	\arrow["\eta", from=4-2, to=3-3]
	\arrow["\sigma"', from=4-2, to=3-4]
    \arrow["{\rho \log}", bend left=60, dashed, from=2-3, to=3-5]
	\arrow["{\mathbb{P}\sigma}"', bend right=10, dashed, from=4-2, to=3-5]
\end{tikzcd}\end{equation}
In the diagram~\eqref{eq-tikz}, the unlabeled maps in~\eqref{eq-tikz} are the natural ones, and the dashed arrows refer to partially defined maps. A partially defined map $f\colon Y\dashrightarrow Z$ is a map defined on a subset $S\subset Y$. The image of such a map is defined to be the image of the subset $S$ and is denoted by $f(Y)$; the composition of two partially defined maps is again a partially defined map, defined on the largest subset where evaluation makes sense. The labeled maps $\log$, $\mathbb{P}$, $\rho$, $\delta$, $\eta$, and $\sigma$ are defined as follows:
\begin{itemize}
    \item Write $T_0J = \HH^0(J, \Omega_J^1)^{\vee}$ for the tangent space at the identity of $J$, and write $\log\colon J(\Q_p) \rightarrow T_0J$ for the logarithm map; see \cite[III, \S7.6]{Bourbaki-liegroupsliealgebras} and \cite{Zarhin-logarithmmap} for its definition.
    This is the unique $p$-adic Lie group morphism whose derivative equals the identity. It is a local diffeomorphism, and its kernel equals the (finite) torsion subgroup $J(\Q_p)_{\mathrm{tors}}\subset J(\Q_p)$. The image of $\log$ is a $\Z_p$-lattice in $T_0J$; we fix a basis of $\HH^0(J, \Omega_J^1)$ so that $\log\colon J(\Q_p)\rightarrow T_0J\simeq \Q_p^g$ has image exactly $\Z_p^g$.
    \item For a field $k$ we let $\P\colon k^g\smallsetminus \{0\} \rightarrow \P^{g-1}(k)$ be the usual homogenization map.
    \item We write $\rho$ for the reduction map $\P^{g-1}(\Q_p) \rightarrow \P^{g-1}(\F_p)$ or the composite $\Q_p^g \smallsetminus \{0\} \xrightarrow{\P} \P^{g-1}(\Q_p) \rightarrow \P^{g-1}(\F_p)$.
    \item The $\F_p$-vector space $\Sel_pJ$ is the $p$-Selmer group of $J$, and it comes equipped with a Kummer map $\delta\colon J(\Q)/pJ(\Q) \rightarrow \Sel_p J$.
If $x\in \Sel_p J \subset \HH^1(\Q,J[p])$, then the local Galois cohomology class $x_p$ at $p$ lies in the image of the injective map $\delta_p \colon J(\Q_p)/pJ(\Q_p) \to \HH^1(\Q_p, J[p])$.
The map $\eta$ is given by $x\mapsto \delta_p^{-1}(x_p)$.
The map $\sigma$ is the composition of $\eta$ with $\log\otimes \F_p$.
\end{itemize}
All the squares and triangles in this diagram are commutative, whenever compositions are defined.

Observe that $X(\Q)$ lies in the intersection $X(\Q_p) \cap \ol{J(\Q)} \subset J(\Q_p)$. In  Chabauty's method, the objective is to bound $X(\Q)$ by controlling this intersection. The idea of $p$-Selmer group Chabauty is to achieve this control by pushing this intersection forward all the way to $\mathbb{P}^{g-1}(\F_p)$ via the maps in the diagram~\eqref{eq-tikz}. Specifically, we obtain the following result:

\begin{proposition}\label{proposition: Chabauty arbitrary subvarieties}
Suppose $\sigma$ is injective and $\P\sigma(\Sel_p J) \cap \rho\log(X(\Q_p))  = \varnothing$.
Then $X(\Q_p) \cap \overbar{J(\Q)}\subset J(\Q_p)[p']$, where $J(\Q_p)[p']$ denotes the $($finite$)$ subgroup of prime-to-$p$ torsion elements of $J(\Q_p)$.
\end{proposition}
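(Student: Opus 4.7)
The plan is to take an arbitrary $P \in X(\Q_p) \cap \overline{J(\Q)}$ and trace $P$ through the diagram~\eqref{eq-tikz} to deduce that $P$ lies in $J(\Q_p)[p']$. Write $\Lambda \defeq \overline{J(\Q)}$ for brevity.

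The heart of the argument will be a \emph{divisibility lemma}: I claim that if $P \in \Lambda$ and $\log(P) \in p\Z_p^g$, then $P \in p\Lambda$. To prove this, I would first observe that $J(\Q)$ is dense in $\Lambda$ and $\Lambda/p\Lambda$ is discrete (being a quotient of the compact $p$-adic Lie group $\Lambda$ by its closed subgroup $p\Lambda$), so the map $\mu\colon J(\Q)/pJ(\Q) \twoheadrightarrow \Lambda/p\Lambda$ in~\eqref{eq-tikz} is surjective. Writing $P = Q + pR$ with $Q \in J(\Q)$ and $R \in \Lambda$, I would compute $\log(Q) = \log(P) - p\log(R) \in p\Z_p^g$, so the diagram identifies $\sigma(\delta(Q))$ with $\log(Q) \bmod p = 0$; injectivity of $\sigma$ (combined with that of $\delta$) then forces $Q \in pJ(\Q) \subset p\Lambda$, giving $P \in p\Lambda$.

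Granting the lemma, I would first dispatch the non-torsion case. If $\log(P) \neq 0$ has $p$-adic valuation $n$, iterating the lemma $n$ times produces $S \in \Lambda$ with $P = p^n S$ and $\log(S)$ primitive in $\Z_p^g$. Writing $S$ as a limit of rational points $S_i \in J(\Q)$, continuity of $\log$ and discreteness of $\mathbb{F}_p^g$ give $\log(S_i) \equiv \log(S) \pmod p$ for large $i$, and~\eqref{eq-tikz} identifies $\log(S_i) \bmod p$ with $\sigma(\delta(S_i)) \in \sigma(\Sel_p J)$. Applying $\mathbb{P}$ and using that $\rho\log(P) = \mathbb{P}(\log(S) \bmod p)$, I would obtain $\rho\log(P) \in \mathbb{P}\sigma(\Sel_p J) \cap \rho\log(X(\Q_p))$, contradicting the hypothesis. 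So $\log(P) = 0$, i.e., $P$ must be torsion. For the remaining torsion case, iterate the lemma (which applies trivially whenever $\log = 0$) to conclude $P \in \bigcap_{n \geq 0} p^n \Lambda$. Decomposing $P = P_{p^\infty} + P_{p'}$ via $J(\Q_p)_{\text{tors}} = J(\Q_p)[p^\infty] \oplus J(\Q_p)[p']$, the inclusion $P \in p^n J(\Q_p)$ forces $P_{p^\infty} \in p^n J(\Q_p)[p^\infty]$ for every $n$, which vanishes once $n$ exceeds the exponent of the finite group $J(\Q_p)[p^\infty]$. Hence $P = P_{p'} \in J(\Q_p)[p']$.

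The main obstacle I expect will be the divisibility lemma itself: both ingredients — surjectivity of $\mu$ and extraction of a lift $Q \in pJ(\Q)$ — require careful use of the diagram, and the injectivity of $\sigma$ is essential for the latter. Once the lemma is established, the remainder is a short chase, with non-torsion ruled out by the disjointness hypothesis and $p$-power torsion ruled out by finiteness of $J(\Q_p)[p^\infty]$.
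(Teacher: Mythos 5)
Your proof is correct and follows essentially the same strategy as the paper's: your divisibility lemma is a restatement of the paper's observation that injectivity of $\sigma$ forces $\overline{J(\Q)}/p\overline{J(\Q)} \to \F_p^g$ to be injective, and both proofs then extract a non-$p$-divisible element whose image in $\P^{g-1}(\F_p)$ contradicts the disjointness hypothesis. The paper is slightly more compact --- it lifts directly along $\mu^{-1}$ to $\Sel_p J$ rather than approximating $S$ by rational points $S_i$, and it folds your explicit torsion-case analysis into the single claim that an element $P \notin J(\Q_p)[p']$ is not infinitely $p$-divisible --- but the underlying content is the same.
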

\begin{proof}
    The proof is essentially identical to that of \cite[Proposition 6.2]{PoonenStoll-Mosthyperellipticnorational}, which never uses the assumption that $X$ is a curve; we include the details for completeness.
    By assumption, $\sigma$ is injective, so $\sigma\delta$ is also injective.
    It then follows from~\eqref{eq-tikz} that $\mu$ is injective too. Thus, $\mu$ is an isomorphism, and the map $\overbar{J(\Q)}/p\overbar{J(\Q)}\rightarrow \F_p^g$ is injective.

    Suppose that the conclusion fails, and let $P\in X(\Q_p) \cap \overbar{J(\Q)}$ be an element not contained in $J(\Q_p)[p']$.
    Then $P$ is not infinitely $p$-divisible in $J(\Q_p)$, and hence also not  infinitely $p$-divisible in $\overbar{J(\Q)}$. Thus, there is an element $Q \in \overbar{J(\Q)}$ and an integer $n\geq 0$ such that $P = p^n Q$ and $Q\not\in p \overbar{J(\Q)}$.
    The image $\ol{Q}$ of $Q$ in $\overbar{J(\Q)}/p \overbar{J(\Q)}$ is nonzero, so its image under $\log\otimes \F_p$ in $\F_p^g$ is nonzero.
    Consequently, $\P\sigma \delta\mu^{-1}(\ol{Q})$ is well-defined and lies in $\P \sigma (\Sel_p J)$.
    On the other hand, 
    \begin{align*}
        \P\sigma \delta\mu^{-1}(\ol{Q}) = \rho \log(Q) = \rho \log(p^n Q) = \rho \log (P) \in \rho \log(X(\Q_p)),
    \end{align*}
    contradicting our assumption that $\P \sigma (\Sel_p J) \cap \rho \log(X(\Q_p))  = \varnothing$.
\end{proof}


\section{A criterion for every rational and quadratic point to be expected} \label{sec-crit}

In this section, we specialize the content of \S\ref{sec-selcha} to the case of Jacobians of monic odd-degree hyperelliptic curves over $\Q$, and we obtain a criterion --- namely, Corollary~\ref{corollary: chabauty criterion for no unexpected points} --- for every rational point and every quadratic point on such a curve to be expected.

We start slightly more generally and introduce some notation that we will use in the remainder of the paper.
Let $C$ be a hyperelliptic curve of genus $g\geq 2$.
Denote its Jacobian variety by $J$. There is by definition a degree-$2$ map $\pi \colon C \to \mathbb{P}^1$ defined over $\Q$, called the hyperelliptic map, which is unique up to automorphism of $\mathbb{P}^1$. 
We call the pullback along $\pi$ of a $\Q$-point of $\P^1$ a hyperelliptic divisor, and the linear equivalence class of such a divisor the hyperelliptic divisor class $H$.
The associated linear system is denoted $|H| \defeq \P(\HH^0(C,\mathcal{O}_C(H))) \simeq \P^1$, and consists of divisors of the form $\pi^*(t)$, where $t$ is a point of $\P^1$.
(When $C \in \mathcal{F}_g$, a convenient choice for $H$ is the class $[2P_{\infty}]$.)

The symmetric square $\on{Sym}^2 C$ is a smooth, projective, and geometrically integral variety parametrizing effective divisors of degree $2$ on $C$.
We may therefore view $|H|$ as a closed subscheme of $\on{Sym}^2 C$. 
Let $\on{AJ}\colon \Sym^2 C\rightarrow J$ be the Abel--Jacobi morphism sending an effective degree-$2$ divisor $D$ to the divisor class $[D]-H$.
Let 
\begin{align}\label{equation: definition X}
    X  \defeq \on{AJ}(\on{Sym}^2 C)
\end{align}
be the scheme-theoretic image of $\on{Sym}^2C$ under $\on{AJ}$. The tangent space of the point $0$ in $X$ has dimension $g$ \cite[Chapter IV, Proposition (4.2)]{arbarellocornalbagriffithsharris-geometryalgebraiccurves}, so if $g\geq 3$, then $X$ has a singular point, resolved by the map $\on{AJ} \colon \on{Sym}^2 C \rightarrow X$.
\begin{lemma}\label{lemma: AJ birational onto image}
    The morphism $\mathrm{AJ}\colon \on{Sym}^2 C \rightarrow J$ is birational onto its image $X$.
    It is an isomorphism above $X\smallsetminus \{0\}$ and the $($scheme-theoretic$)$ fiber above $\{0\}$ is the closed subscheme $|H| \subset \on{Sym}^2 C$.
\end{lemma}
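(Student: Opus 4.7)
The plan is to analyze $\on{AJ}$ via the classical dictionary between effective degree-$2$ divisors on $C$ and sections of line bundles. Two such divisors $D_1, D_2$ satisfy $\on{AJ}(D_1) = \on{AJ}(D_2)$ iff $D_1 \sim D_2$, so each fiber of $\on{AJ}$ set-theoretically equals a complete linear system $|D| = \P(\HH^0(C, \mathcal{O}(D)))$. Since $C$ is hyperelliptic of genus $g \geq 2$, the hyperelliptic system $|H|$ is the unique $g^1_2$ on $C$, so $h^0(\mathcal{O}(D)) = 1$ for every effective $D \notin |H|$, while $h^0(\mathcal{O}(H)) = 2$. In particular, $|H| \cong \P^1$ is contracted by $\on{AJ}$ to the point $\on{AJ}(H) = 0$, and $\on{AJ}$ is set-theoretically injective on $\on{Sym}^2 C \smallsetminus |H|$.

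To upgrade this to a scheme-theoretic isomorphism over $X \smallsetminus \{0\}$, I would compute the differential $d\on{AJ}_D \colon T_D \on{Sym}^2 C \to T_0 J = \HH^0(C, \Omega_C^1)^\vee$ and show it is injective off $|H|$. At $D = P + Q$ with $P \neq Q$ (and analogously on the diagonal), $T_D \on{Sym}^2 C \cong T_P C \oplus T_Q C$, and $d\on{AJ}_D$ is dual to the natural evaluation map $\HH^0(C, \Omega_C^1) \to \HH^0(C, \Omega_C^1|_D)$. The short exact sequence $0 \to \Omega_C^1(-D) \to \Omega_C^1 \to \Omega_C^1|_D \to 0$ together with Riemann--Roch shows this evaluation has rank $3 - h^0(\mathcal{O}(D))$, equal to $2$ precisely when $D \notin |H|$. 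Hence $\on{AJ}$ is unramified off $|H|$, and the restriction $\on{Sym}^2 C \smallsetminus |H| \to X \smallsetminus \{0\}$ is a proper, set-theoretically bijective, unramified morphism of two-dimensional reduced schemes, which is therefore an isomorphism by Zariski's main theorem.

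For the scheme-theoretic fiber over $0$, I would use the functor of points. A $T$-valued point of $\on{AJ}^{-1}(0)$ is an effective relative degree-$2$ divisor $D \subset C_T$ such that $\mathcal{O}_{C_T}(D) \cong q_T^*\mathcal{L} \otimes \mathcal{O}_{C_T}(H_T)$ for some $\mathcal{L} \in \on{Pic}(T)$, where $q_T \colon C_T \to T$ denotes the structure map. Such a divisor is equivalent data to a line subbundle of the rank-$2$ bundle $(q_T)_*\mathcal{O}_{C_T}(H_T) \cong \mathcal{O}_T \otimes_{\Q} \HH^0(C, \mathcal{O}(H))$. The representing scheme is therefore $\P(\HH^0(C, \mathcal{O}(H))) = |H| \cong \P^1$ with its reduced structure, giving the equality $\on{AJ}^{-1}(0) = |H|$ as closed subschemes of $\on{Sym}^2 C$.

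The main technical step is the tangent-space computation in the second paragraph; once $\on{AJ}$ is known to be unramified off $|H|$, the isomorphism over $X \smallsetminus \{0\}$ and the representability of the central fiber follow from standard scheme-theoretic machinery.
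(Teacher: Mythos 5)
Your proof is correct in substance and reaches the same conclusion as the paper, but by a more self-contained route. The paper's own proof is a one-liner: it cites \cite[Chapter~IV, Lemma~(1.1)]{arbarellocornalbagriffithsharris-geometryalgebraiccurves}, which states directly that the \emph{scheme-theoretic} fiber of $\on{AJ}$ over $[E]\in J$ is the complete linear system $|E+H|$ viewed as a closed subscheme of $\Sym^2 C$; combined with the uniqueness of the $g^1_2$ on a hyperelliptic curve, the lemma follows immediately. Your argument essentially re-derives that ACGH lemma from scratch: the set-theoretic description of the fibers, the tangent-space computation showing $\on{AJ}$ is unramified off $|H|$ (your rank count $3 - h^0(\O_C(D))$ is exactly right), and a functor-of-points identification of the central fiber with $\P\bigl(\HH^0(C,\O_C(H))\bigr)$. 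What the paper buys by citing the packaged result is brevity; what you buy is that a reader need not chase the reference.

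One small imprecision worth flagging: you justify the isomorphism over $X\smallsetminus\{0\}$ by saying a proper, set-theoretically bijective, unramified morphism of reduced schemes is an isomorphism ``by Zariski's main theorem.'' Grothendieck's form of ZMT only gives you that the (proper, quasi-finite) restriction is \emph{finite}; the remaining step is the separate standard fact that a finite, unramified, universally injective morphism is a closed immersion (unramified + universally injective makes the diagonal a surjective open immersion, hence $\on{AJ}$ a monomorphism, and finite monomorphisms are closed immersions), and a surjective closed immersion onto a reduced scheme is an isomorphism. The classical birational form of ZMT would apply if you additionally knew $X\smallsetminus\{0\}$ were normal --- which it is, by the Riemann singularity theorem (cf.\ \cite[Chapter~IV, Proposition~(4.2)]{arbarellocornalbagriffithsharris-geometryalgebraiccurves}) --- but you never invoke that. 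This is a matter of attribution rather than a genuine gap; the underlying argument is sound.
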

\begin{proof}
    The fiber above $[E] \in J$ is the projective space of divisors $D\in \on{Sym}^2 C$ in the linear system $|E+H|$ by \cite[Chapter IV, Lemma (1.1)]{arbarellocornalbagriffithsharris-geometryalgebraiccurves}.
    Since $|H|$ is the only positive dimensional degree-$2$ linear system, the lemma follows.
\end{proof}
\begin{lemma}\label{lemma: X(Q) empty means no unexpected points}
    Suppose that $X(\Q) = \{0\}$.
    Then $C(\Q)$ consists of at most one Weierstrass point, and every quadratic point on $C$ maps to $\P^1(\Q)$ under $\pi$.
\end{lemma}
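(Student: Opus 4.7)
The plan is to translate the hypothesis $X(\Q) = \{0\}$ into a statement about $\Q$-rational effective degree-$2$ divisors on $C$ using Lemma~\ref{lemma: AJ birational onto image}, and then carry out a short case analysis. Since $\on{AJ} \colon \on{Sym}^2 C \to X$ is an isomorphism over $X \smallsetminus \{0\}$ with scheme-theoretic fiber $|H|$ over $0$, any $\Q$-point of $\on{Sym}^2 C$ maps to a $\Q$-point of $X$, which by hypothesis must be $0$, and hence lies in $|H|$. Thus $(\on{Sym}^2 C)(\Q) \subseteq |H|(\Q)$; equivalently, every $\Q$-rational effective divisor of degree $2$ on $C$ has the form $\pi^{*}(t)$ for some $t \in \P^1(\Q)$, or equivalently $R + \iota R$ for some $R \in C(\ol{\Q})$, where $\iota$ is the hyperelliptic involution.

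Next, I would enumerate the three possible shapes of a $\Q$-rational effective divisor $D$ of degree $2$ on $C$ and apply this criterion to each. If $D = 2P$ with $P \in C(\Q)$, then $D \in |H|$ forces $\pi^{-1}(\pi(P)) = 2P$, hence $P = \iota P$, so $P$ is a Weierstrass point. If $D = P + Q$ with $P, Q \in C(\Q)$ distinct, then $D \in |H|$ forces $Q = \iota P$; combining with the preceding case applied to both $P$ and $Q$ gives $P = \iota P = Q$, contradicting $P \neq Q$. If $D = P + P^\sigma$ with $P$ a quadratic point and $\sigma$ the nontrivial element of $\Gal(\Q(P)/\Q)$, then $D \in |H|$ forces $P^\sigma = \iota P$, whence $\pi(P)^{\sigma} = \pi(\iota P) = \pi(P)$, so $\pi(P) \in \P^1(\Q)$.

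Combining these observations yields both conclusions of the lemma: any $P \in C(\Q)$ must be a Weierstrass point, and no two such points can coexist, so $\#C(\Q) \leq 1$; moreover every quadratic point on $C$ is expected. There is no real obstacle here; the only mild subtlety is the Galois-descent statement that every $\Q$-point of $\on{Sym}^2 C$ arises as one of the three types of divisor listed above, which is standard.
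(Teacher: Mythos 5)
Your proof is correct and takes the same approach as the paper, which reduces to the observation that $(\Sym^2 C)(\Q) = |H|(\Q)$ and then states that the conclusions follow by a short verification. Your case analysis on the three shapes of $\Q$-rational effective degree-$2$ divisors is precisely the ``one can check'' that the paper leaves implicit.
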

\begin{proof}
    Using Lemma \ref{lemma: AJ birational onto image}, the condition $X(\Q) = \{0\}$ implies that $(\on{Sym}^2C)(\Q) = |H|(\Q)$. 
    In other words, every $\Q$-rational effective degree-$2$ divisor on $C$ is of the form $\pi^*(t)$ for some $t\in \P^1(\Q)$.
    One can check that this implies the conclusions of the lemma.
\end{proof}

Applying Proposition~\ref{proposition: Chabauty arbitrary subvarieties} to our choice of $X$ and using Lemma~\ref{lemma: X(Q) empty means no unexpected points}, we obtain the following criterion for the absence of unexpected rational and quadratic points on curves $C\in \mathcal{F}_g$ in terms of the diagram~\eqref{eq-tikz}:
\begin{corollary}\label{corollary: chabauty criterion for no unexpected points}
    Let $C\in \mathcal{F}_g$. Suppose that: 
    \begin{itemize}
        \item $\sigma\colon \Sel_p J \rightarrow \F_p^g$ is injective;
        \item $\P\sigma(\Sel_p J) \cap \rho \log (X(\Q_p)) = \varnothing$; and
        \item $X(\Q_p) \cap J(\Q_p)[p'] =\{0\}$.
    \end{itemize}
    Then $C$ has no unexpected rational or quadratic points.
\end{corollary}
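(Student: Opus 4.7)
The strategy is to simply chain together Proposition~\ref{proposition: Chabauty arbitrary subvarieties} and Lemma~\ref{lemma: X(Q) empty means no unexpected points}, applied to the subvariety $X \subset J$ defined in~\eqref{equation: definition X}, so almost no new work is required: the only thing to verify is that the three bulleted hypotheses of the corollary combine to force $X(\Q) = \{0\}$.

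The plan is as follows. First, the initial two hypotheses are exactly those of Proposition~\ref{proposition: Chabauty arbitrary subvarieties}, so I would apply it verbatim to deduce the containment $X(\Q_p) \cap \ol{J(\Q)} \subset J(\Q_p)[p']$. Intersecting both sides with $X(\Q_p)$ and invoking the third hypothesis $X(\Q_p) \cap J(\Q_p)[p'] = \{0\}$ yields $X(\Q_p) \cap \ol{J(\Q)} \subset \{0\}$. Since the rational points $X(\Q)$ sit inside $J(\Q)$, hence inside $\ol{J(\Q)}$, and also inside $X(\Q_p)$, we obtain $X(\Q) \subset \{0\}$; because $0 = \on{AJ}([H])$ is visibly a $\Q$-point of $X$, in fact $X(\Q) = \{0\}$.

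At this point Lemma~\ref{lemma: X(Q) empty means no unexpected points} applies directly: every quadratic point on $C$ maps to $\P^1(\Q)$ under $\pi$ (i.e., is expected), and $C(\Q)$ consists of at most one Weierstrass point. Since $P_\infty$ is a $\Q$-rational Weierstrass point of any $C \in \mathcal{F}_g$, the latter forces $C(\Q) = \{P_\infty\}$, so there are no unexpected rational points either.

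There is no substantive obstacle here; the statement is a packaging of the preceding two results. The only mild subtlety to flag is the need to observe $0 \in X(\Q)$ so that the containment $X(\Q) \subset \{0\}$ is actually an equality, and the reminder that $P_\infty$ is Weierstrass (since $f$ has odd degree), so that Lemma~\ref{lemma: X(Q) empty means no unexpected points} pins down $C(\Q)$ to be exactly $\{P_\infty\}$ rather than possibly some other single Weierstrass point arising from a rational root of $f$.
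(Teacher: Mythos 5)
Your proof is correct and matches the paper's intent exactly: the paper presents this corollary as an immediate consequence of Proposition~\ref{proposition: Chabauty arbitrary subvarieties} and Lemma~\ref{lemma: X(Q) empty means no unexpected points}, which is precisely the chain you carry out. The two small points you flag --- that $0 \in X(\Q)$ upgrades the inclusion to equality, and that $P_\infty$ being a rational Weierstrass point pins down $C(\Q)=\{P_\infty\}$ --- are both correct and worth noting.
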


\section{Controlling the image \texorpdfstring{$(\Sym^2 C)(\Q_2)$}{Sym2} under \texorpdfstring{$\rho\log$}{rho log}}

The purpose of this section is to construct a subfamily $\mathcal{F}_g^{(2)}\subset \mathcal{F}_g$ of positive density for which (in the notation of \S\ref{sec-crit}) $X(\Q_2) \cap J(\Q_2)[2'] = \{0\}$ and for which we can control the image $\rho\log(X (\Q_2))$.

In~\cite{PoonenStoll-Mosthyperellipticnorational}, which concerns rational points on $C$, the corresponding problem is to control the set $\rho\log(C(\Q_2))$. To do this, Poonen and Stoll consider a particular curve $C_0/\Q_2$ --- namely, the curve with long Weierstrass equation $C_0\colon y^2 + y = x^{2g+1} + x + 1$ --- and they prove for this choice of $C_0$ that $\#\rho\log(C_0(\Q_2)) = 1$. 
They then separately prove that as $C$ varies in the moduli space of monic odd hyperelliptic curves of genus $g$ over $\Q_2$, the set $\rho\log(C(\Q_2))$ is locally constant, from which it follows that for every $g\geq 3$ a positive proportion of curves $C \in \mc{F}_g$ have $\#\rho\log(C(\Q_2)) = 1$.

Here, we adopt a slightly different approach: we construct a subfamily $\mc{F}_g^{(2)} \subset \mc{F}_g$, defined by congruence conditions, such that the image $\rho\log((\on{Sym}^2 C)(\Q_2))$ can be explicitly bounded for \emph{all} curves $C \in \mc{F}_g^{(2)}$, much as Poonen and Stoll did for the single curve $C_0$.  All curves in these subfamilies have good reduction at $2$, and their Jacobians all have trivial $2$-torsion over both $\Q_2$ and $\F_2$. Thus, while $\mc{F}_g^{(2)}$ has positive density in $\mc{F}_g$, this density is exponentially small in $g$.
\medskip

The curves in $\mc{F}_g^{(2)}$ will be those that arise by completing the square in long Weierstrass equations considered in the following definition:
\begin{definition}
    We say a monic polynomial $h \in \Z_2[x]$ of degree $2g+1$ is \emph{good} if it satisfies the following property: letting $S$ be the set of exponents of the mod-$2$ reduction of $h$, and for each $i \in \{0,1,2\}$, letting $S_i \subset S$ be the subset of exponents congruent to $i \pmod 3$, we have $S \supset \{0,2g+1\}$, $\#S_0 \equiv 0 \pmod 2$, and $\#(S_1 \sqcup S_2) \equiv 1 \pmod 2$.
    For such an $h$, let $C_h/\Q_2$ be the hyperelliptic curve with long Weierstrass model 
    \begin{equation} \label{eq-egcurveq}
    y^2+y = h(x).
    \end{equation}
\end{definition}

Let $P_g = \{ h(x) = x^{2g+1} + c_1 x^{2g} + \cdots +c_{2g+1} \in \Z_2[x]\} = \Z_2^{2g+1}$ be the parameter space of monic polynomials of degree $2g+1$ over $\Z_2$, and let $U_g\subset P_g$ be the open subset of good polynomials.

Let $C = C_h$, where $h\in U_g$ is good. 
The model \eqref{eq-egcurveq} has good reduction, hence defines a smooth proper model $\mathcal{C}$ of $C$ over $\Z_2$.
Let $\mathcal{J} = \Pic^0_{\mathcal{C}/\Z_2}$ be the N\'eron model of the Jacobian $J$ of $C$.
We write $P_{\infty} \in \mathcal{C}(\Z_2)$ for the unique point at infinity. 

The next lemma records the useful properties satisfied by good curves:
\begin{lemma} \label{lem-goodness}
    Let $C/\Q_2$ be a hyperelliptic curve of the form $C_h$ for some good $h\in \Z_2[x]$.
    Then:
    \begin{enumerate}
        \item[$(a)$] $J(\Q_2)[2] = \{0\} = \mc{J}(\F_2)[2]$.
        \item[$(b)$] $\mc{C}(\F_2) = \{P_\infty\}$ and $\mc{C}(\F_4) = \{P_\infty,(0,\alpha),(0,\alpha+1),(1,\alpha),(1,\alpha+1)\}$, where $\alpha \in \F_4 \smallsetminus \{1\}$ denotes a nontrivial third root of unity.
        \item[$(c)$] $X(\Q_2) \cap J(\Q_2)[2'] = \{0\}$, where $X$ is defined by \eqref{equation: definition X} and $J(\Q_2)[2']$ is the prime-to-$2$ torsion subgroup of $J(\Q_2)$.
    \end{enumerate}
\end{lemma}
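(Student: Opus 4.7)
The plan is to verify the three assertions in sequence: (b) by a direct finite-field computation, (a) by combining a special-fiber and a generic-fiber argument, and (c) using (b) together with a reduction argument.

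For (b), the $\F_q$-points for $q\in\{2,4\}$ can be read off from the Artin--Schreier form of the equation. Since $y\mapsto y^2+y$ on $\F_2$ has image $\{0\}$ and on $\F_4$ has image $\F_2=\{0,1\}$ (both with fibers of size $2$), the affine $\F_q$-points above a given $x$ are controlled by the value of $\bar h(x)$. Goodness gives $\bar h(0)=1$ (because $0\in S$) and $\bar h(1)\equiv |S|\equiv |S_0|+|S_1\sqcup S_2|\equiv 0+1\equiv 1\pmod 2$, ruling out affine $\F_2$-points. For $x\in\{\alpha,\alpha+1\}$, grouping powers of $\alpha$ by residue modulo $3$ yields $\bar h(\alpha)=(|S_0|+|S_2|)+(|S_1|+|S_2|)\alpha$; since $|S_1|+|S_2|=|S_1\sqcup S_2|$ is odd, $\bar h(\alpha)\notin \F_2$, so no affine $\F_4$-point lies above $\F_4\smallsetminus\F_2$. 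The remaining affine $\F_4$-points are the four solutions of $y^2+y=1$ above $x\in\{0,1\}$, giving the claimed list.

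For (a), the two halves require separate arguments, since at residue characteristic $2$ the formal group can carry $2$-torsion and the naive injection $J(\Q_2)[2]\hookrightarrow \mc{J}(\F_2)[2]$ need not hold. To show $\mc{J}(\F_2)[2]=0$, I would compute the Cartier operator on the basis $\{\omega_j=x^j\,dx:0\le j\le g-1\}$ of $\HH^0(\mc{C}_{\F_2},\Omega^1)$, whose holomorphy follows from $v_{P_\infty}(dx)=2g-2$ (obtained from Riemann--Hurwitz applied to the wildly ramified hyperelliptic map). Using the standard identities $\mathrm{Cart}(g\,dg)=dg$ and $\mathrm{Cart}(f^2\omega)=f\,\mathrm{Cart}(\omega)$, one finds $\mathrm{Cart}(\omega_{2k})=0$ and $\mathrm{Cart}(\omega_{2k+1})=\omega_k$, so the Cartier operator is strictly index-decreasing, hence nilpotent. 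This forces the $2$-rank of $\mc{J}_{\F_2}$ to vanish, so $\mc{J}[2](\bar{\F}_2)=0$, and in particular $\mc{J}(\F_2)[2]=0$. For $J(\Q_2)[2]=0$, I would complete the square by setting $Y=2y+1$, rewriting $C$ as $Y^2=4h(x)+1=:f(x)$ over $\Q_2$. Since $v_2(f(0))=0$, $v_2(\text{leading coefficient})=2$, and all intermediate coefficients have $v_2\ge 2$, the Newton polygon of $f$ is a single segment of slope $2/(2g+1)$; as $\gcd(2,2g+1)=1$, every root of $f$ generates a totally ramified degree-$(2g+1)$ extension of $\Q_2$, so $f$ is irreducible over $\Q_2$ and remains irreducible over every quadratic extension $K/\Q_2$ (because $2\nmid 2g+1$ forces $L\cap K=\Q_2$, where $L=\Q_2(\alpha)$). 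Under the identification $J[2](\bar\Q_2)\simeq \{T\subseteq\{W_1,\dots,W_{2g+1}\}\}/(T\sim T^c)$ via Weierstrass points $W_i=(\alpha_i,0)$, this transitivity forces $J[2](\Q_2)=\{0\}$.

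For (c), the key input is (b). Enumerating $(\Sym^2\mc{C})(\F_2)$ using $\mc{C}(\F_2)=\{P_\infty\}$ and the two Galois-conjugate pairs above $x\in\{0,1\}$ from (b) gives exactly three elements: $2P_\infty$, $D_0=\pi^*(0)$, and $D_1=\pi^*(1)$, all of which are hyperelliptic divisors, so $\on{AJ}((\Sym^2\mc{C})(\F_2))=\{0\}$. Given $P\in X(\Q_2)\cap J(\Q_2)[2']$, Lemma~\ref{lemma: AJ birational onto image} lets one write $P=\on{AJ}(D)$ for some $D\in(\Sym^2 C)(\Q_2)$; by properness of $\Sym^2\mc{C}$ over $\Z_2$, $D$ extends to $\mc{D}\in(\Sym^2\mc{C})(\Z_2)$, whose reduction lies in the three-element set above, so $\bar P=0$ in $\mc{J}(\F_2)$. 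Finally, the formal group $\mc{J}^1(\Z_2)=\hat{\mc{J}}(2\Z_2)$ has no prime-to-$2$ torsion (multiplication by any integer coprime to $2$ is a formal-group automorphism), so the reduction map is injective on $J(\Q_2)[2']$, forcing $P=0$. The main obstacle is really part (a): the formal-group subtleties at residue characteristic $2$ prevent a uniform treatment of the generic and special fibers, so two complementary techniques (Cartier nilpotency downstairs, Newton polygon upstairs) are needed.
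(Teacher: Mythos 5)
Your proof is correct and follows essentially the same route as the paper: part $(a)$ combines the Newton-polygon irreducibility argument for $J(\Q_2)[2]$ with a $2$-rank computation for $\mc{J}(\F_2)[2]$; part $(b)$ is the direct finite-field computation the paper leaves implicit; and part $(c)$ uses properness of $\Sym^2\mc{C}$, the enumeration from $(b)$, and injectivity of the reduction map on prime-to-$2$ torsion, exactly as in the paper. The one small variation is in establishing $\mc{J}(\bar\F_2)[2]=0$: you compute the Cartier operator on $\HH^0(\mc{C}_{\F_2},\Omega^1_{\mc{C}_{\F_2}})$ with respect to the basis $\{x^j\,dx\}_{0\le j\le g-1}$ and observe it is nilpotent, whereas the paper notes that the Artin--Schreier double cover has $P_\infty$ as its unique branch point, so the $2$-rank vanishes --- two standard, equivalent ways of expressing the same fact.
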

\begin{proof}
     Completing the square on the left-hand side of~\eqref{eq-egcurveq}, we obtain a short Weierstrass equation for $C$ of the form $y^2 = f(x) \defeq h(x) + 1/4$, thus realizing $C$ has a monic odd-degree hyperelliptic curve.
     
     To verify $(a)$, note that the Newton polygon of $f$ is the line segment from $(0,-2)$ to $(2g+1,0)$, which contains no lattice points, implying that $f$ is irreducible over $\Q_2$. Consequently, we have $J(\Q_2)[2] = 0$. Since the hyperelliptic involution on $\mc{C}_{\F_2}$ is given by $(x,y) \mapsto (x,y+1)$, its only fixed point is $P_\infty \in \mc{C}(\F_2)$, implying that $\mc{J}(\ol{\F}_2)[2] = 0$, hence $\mc{J}(\F_2)[2]=0$. 
     The verification of $(b)$ is a computation.
     To verify $(c)$, note that the restriction of the reduction map $\on{red}\colon J(\Q_2)[2'] \rightarrow \mathcal{J}(\F_2)$ is injective, so it suffices to prove that $\on{red}(X(\Q_2) \cap J(\Q_2)[2']) = \{0\}$.
     Every element in $\on{red}(X(\Q_2) \cap J(\Q_2)[2'])$ is represented by a divisor of the form $P_1 + P_2 - 2P_{\infty}$, where either $P_1, P_2 \in \mathcal{C}(\F_2)$, or $P_1,P_2$ form a pair of Galois conjugate points in $\mathcal{C}(\F_4)$.
     In both cases, the divisor class $[P_1 + P_2 - 2P_{\infty}]$ is trivial by $(b)$.
\end{proof}

Lemma~\ref{lem-goodness} allows us to choose a basis of differentials uniformly for all $C_h$ such that the logarithm map is surjective. 
If $h\in U_g$ and $C= C_h$ is the hyperelliptic curve with equation \eqref{eq-egcurveq}, the change-of-variables $s \defeq 1/x$ and $t \defeq y/x^{g+1}$ applied to~\eqref{eq-egcurveq} yields the equation
\begin{equation} \label{eq-stcurveq}
t^2 + s^{g+1}t = s^{2g+2}h(1/s).
\end{equation}
Define
\begin{align} \label{eq-omega1def}
\omega_1 & \defeq \frac{dt}{\frac{\partial}{\partial s}\left(t^2 + s^{g+1}t - s^{2g+2}h(1/s)\right)}, 
\end{align}
and $\omega_j \defeq s^{j-1}\omega_1$ for each $j \in \{2, \dots, g\}$.
By the general theory of hyperelliptic curves, $\omega_1,\dots,\omega_g$ is a $\Z_2$-basis of $\HH^0(\mathcal{C}, \Omega^1_{\mathcal{C}/\Z_2})$. 
By Lemma~\ref{lem-goodness} in conjunction with~\cite[Lemma~10.1]{PoonenStoll-Mosthyperellipticnorational}, the logarithm map $\log \colon J(\Q_2) \rightarrow \HH^0(J,\Omega_J^1)^{\vee} \simeq  \HH^0(C, \Omega^1_C)^{\vee} \simeq \Q_2^g$ corresponding to this choice of basis satisfies $\log(J(\Q_2)) = (2\Z_2)^g$.
Using this identification, we may consider the subset $\rho \log(X(\Q_2)) \subset \P^{g-1}(\F_2)$ like we did in \S\ref{sec-selcha} and \S\ref{sec-crit}.

\begin{proposition} \label{prop-boundsym2}
    As $h$ varies in $U_g$, the subset $\rho \log(X(\Q_2))\subset \P^{g-1}(\F_2)$ is locally constant and satisfies 
    \begin{align*}
    \#\rho \log(X(\Q_2))\leq 5.    
    \end{align*}
\end{proposition}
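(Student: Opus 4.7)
I would analyze $\rho \log \circ \on{AJ}$ residue-disk by residue-disk on $(\on{Sym}^2 C)(\Q_2)$. By Lemma~\ref{lem-goodness}(b), $(\on{Sym}^2\mc{C})(\F_2)$ consists of three points: $\bar D_\infty := 2[P_\infty]$, $\bar D_0 := [(0,\alpha)]+[(0,\alpha+1)]$, and $\bar D_1 := [(1,\alpha)]+[(1,\alpha+1)]$. Each is a fiber of the hyperelliptic map $\pi_{\F_2}$ (over $\infty$, $0$, and $1$ respectively), so each has divisor class $H$ and maps to $0 \in \mc{J}(\F_2)$ under $\on{AJ}$. Consequently $(\on{Sym}^2 C)(\Q_2)$ decomposes as a disjoint union of three residue disks $D_\infty, D_0, D_1$, and $X(\Q_2) \smallsetminus \{0\}$ lies inside $J^1(\Q_2) := \ker(J(\Q_2)\to\mc{J}(\F_2))$. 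By Lemma~\ref{lem-goodness}(a) and \cite[Lemma~10.1]{PoonenStoll-Mosthyperellipticnorational}, the restriction of $\log$ to $J^1(\Q_2)$ is a bijection onto $(2\Z_2)^g$, so it suffices to bound the image of each $\on{AJ}(D_i)$ under $\rho\log$.

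For $D_\infty$, I use the local parameter $t$ at $P_\infty$, which satisfies $s = t^2 + c_1 t^4 + O(t^6)$ by \eqref{eq-stcurveq}; this yields $\omega_j = -t^{2j-2}(1+O(t^2))\,dt$ and hence $f_j(t) := \int_0^t\omega_j = -t^{2j-1}/(2j-1) + O(t^{2j+1})$. Parametrizing $D_\infty$ by the symmetric functions $(\sigma_1,\sigma_2) = (t_1+t_2, t_1 t_2) \in (2\Z_2)^2$, the $j$-th coordinate of $\log\on{AJ}$ reads
\[
L_j = f_j(t_1)+f_j(t_2) = -p_{2j-1}(\sigma_1,\sigma_2)/(2j-1) + R_j(\sigma_1,\sigma_2),
\]
where $p_n$ is the $n$-th power-sum and $R_j$ collects higher-order corrections. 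From the weight bound $v_2(p_n) \ge \lceil n/2\rceil$ on $(2\Z_2)^2$ (using that $p_n$ is weighted-homogeneous of weight $n$ in $\sigma_1,\sigma_2$), I deduce $v_2(L_j)\ge j$. A short case analysis on $v_2(\sigma_1)\in\{1,2,\dots,\infty\}$ then shows that the mod-$2$ projective reduction of $(L_1:\cdots:L_g)$ lies on a specific $\P^1(\F_2)\subset\P^{g-1}(\F_2)$, contributing at most $3$ points to $\rho\log(X(\Q_2))$.

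For $D_i$ with $i\in\{0,1\}$, I parametrize by $P\in C(\Q_4)$ reducing to $Q_i = (i,\alpha) \in \mc{C}(\F_4)$, with associated $\Q_2$-rational divisor $P+\tau P$, where $\Q_4$ denotes the unramified quadratic extension of $\Q_2$ and $\tau$ generates $\Gal(\Q_4/\Q_2)$. Crucially, since $[Q_i + \tau Q_i] = H$ in $\on{Pic}^2(C)$, the ``constant term'' of $\log\on{AJ}$ on $D_i$ vanishes, so $L_j = \on{Tr}_{\Q_4/\Q_2}\!\left(\int_{Q_i}^P\omega_j\right)$. Using $\omega_j = x^{g-j}\,dx/(2y+1)$ (derived from \eqref{eq-omega1def} via the change of coordinates) and the local parameter $u = x-i$, I expand $\int_{Q_i}^P\omega_j$ as a power series in $u\in 2\Z_4$. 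Exploiting the identities $(2\alpha+1)^2 = -3$ and $\on{Tr}_{\Q_4/\Q_2}(1/(2\alpha+1)) = 0$, along with the $\Z_2$-basis $\{1,\alpha\}$ of $\Z_4$, one computes that the leading mod-$2$ behavior of $L_j/2$ depends on $P$ only through a single parity invariant and is independent of $j$ up to a factor involving the parity of $g-j$, so the mod-$2$ projective reduction takes at most one value on each disk. Hence $D_0$ and $D_1$ together contribute at most two further points, giving $\#\rho\log(X(\Q_2)) \le 3+1+1 = 5$.

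Local constancy follows because all expansion coefficients above are polynomials in $c_1,\ldots,c_{2g+1}$, and the relevant mod-$2$ reductions depend only on $h$ modulo a fixed power of $2$; since $U_g$ is defined by mod-$2$ conditions, $\rho\log(X(\Q_2))$ is locally constant on $U_g$. The hardest part will be controlling the quadratic disks $D_0, D_1$: the trace computations over $\Q_4$ require careful bookkeeping to ensure that the higher-order corrections in $u$ do not introduce additional points in $\P^{g-1}(\F_2)$ beyond the prescribed one, and that the alternating pattern predicted by the $(g-j)$ factor is genuinely realized.
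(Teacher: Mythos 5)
The overall strategy — decompose $(\Sym^2 C)(\Q_2)$ into the three residue disks above $(\Sym^2\mc{C})(\F_2)$, and compute $\rho\log$ via Coleman integrals on each — is the same as the paper's. But the per-disk counts you obtain do not match the paper's, and the agreement of the total ($5 = 3 + 1 + 1$ versus $5 = 1 + 2 + 2$) is a coincidence masking two genuine errors.

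The more serious error is in the quadratic disks $D_0, D_1$. You assert that ``the mod-$2$ projective reduction takes at most one value on each disk.'' This is false. The paper's computation in the disk above $(0,\alpha)+(0,\alpha+1)$ yields, after dividing by $(2\gamma+1)(x_1 - x_2)$,
$$\rho\log(P + \ol{P} - 2P_\infty) = \rho\bigl(1,\ (x_1+x_2)/2,\ 0,\ \dots,\ 0\bigr),$$
and $(x_1+x_2)/2 = \on{Tr}_{\Q_4/\Q_2}(x_1)/2 \in \Z_2$ has no constraint on its parity as $x_1$ ranges over $2\O_{\Q_4}$. So this disk contributes exactly the two points $(1:0:\cdots:0)$ and $(1:1:0:\cdots:0)$, not one. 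The ``single parity invariant'' you identify is real, but it does not wash out in the projective reduction — it is precisely what produces the second value. Your heuristic involving ``the parity of $g - j$'' appears to conflate the indexing of the basis of differentials with the structure of the leading coefficients, and does not salvage the claim.

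The $D_\infty$ estimate is the other weak point. You parametrize by $(\sigma_1,\sigma_2)$ and use weighted-valuation bounds on power sums to argue that $\rho\log$ lands in a fixed $\P^1(\F_2) \subset \P^{g-1}(\F_2)$, giving $\leq 3$ points. The paper instead uses the identity $[Q + \iota(Q) - 2P_\infty] = 0$ to write $\log(P+Q-2P_\infty) = \log(P - P_\infty) - \log(\iota(Q) - P_\infty)$, and then factors $t_1 - t_3$ out of every coordinate. After this cancellation the first normalized coordinate is $1 + O(\pi)$ and all others are $O(\pi)$, so $D_\infty$ contributes exactly one point $(1:0:\cdots:0)$. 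Your symmetric-function approach, which sums $f_j(t_1) + f_j(t_2)$ rather than taking this difference, does not see the cancellation and cannot reach the sharp count of $1$; the bound $v_2(L_j) \geq j$ alone leaves several residue possibilities. Even granting your bound of $3$, combining it with the correct contributions of $2$ each from $D_0, D_1$ would give $3 + 2 + 2 = 7$, which fails the proposition. To repair the argument you would need to import the subtraction/factoring trick for $D_\infty$, and to correct the claim for $D_0, D_1$ to allow two values apiece.
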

\begin{proof}
Let $h\in U_g$ be good and $C = C_h$.
Since $\rho\log(X(\Q_2)) = \rho\log(\mathrm{AJ}((\Sym^2C)(\Q_2)))$, it suffices to determine the latter set.
Given a finite extension $K/\Q_2$ with residue field $k$ and an element $x\in \mathcal{C}(k)$, let $D_x(K)$ be the fiber of the reduction map $C(K) \rightarrow \mathcal{C}(k)$. 
Similarly if $x \in (\Sym^2\mathcal{C})(k)$, let $D_{x}(K)$ be the fiber of the reduction map $(\Sym^2 C)(K) \rightarrow (\Sym^2\mathcal{C})(k)$ above $x$.
Lemma \ref{lem-goodness}$(b)$ shows that $(\Sym^2\mathcal{C})(\F_2)$ consists of $3$ elements, represented by the degree-$2$ divisors $2P_{\infty}, (0, \alpha) + (0, \alpha+1)$ and $(1, \alpha)+ (1, \alpha+1)$.
We bound $\rho \log(X(\Q_2))$ by computing $\rho\log$ explicitly on each residue polydisk $D_x(\Q_2)$ for $x\in (\Sym^2\mathcal{C})(\F_2)$.

\medskip
\noindent \emph{Residue polydisk centered at $2P_\infty$.} 
Take a point $P + Q \in D_{2P_\infty}(\Q_2)$. There are two possibilities: either $P,\,Q$ are both defined over $\Q_2$, or $P,\,Q$ are conjugates over a quadratic extension of $\Q_2$. Let $K$ denote the field of definition of $P$, and let $\pi$ be a uniformizer for $K$. 
We use the model \eqref{eq-stcurveq} of $\mathcal{C}$.
In that model, the point $P_\infty \in \mc{C}(\F_2)$ has coordinates $(s,t) = (0,0)$, and the hyperelliptic involution $\iota\colon \mc{C}\rightarrow\mc{C}$ has the form $\iota(s,t) = (s, s^{g+1}-t)$.
The coordinate $t \defeq y/x^{g+1}$ is a uniformizer at $P_{\infty}$ and remains so when reduced mod $\pi$, so the assignment $R\mapsto t(R)$ induces a bijection $D_{P_\infty}(K) \simeq \pi \mc{O}_K$.

We now compute the restriction of the logarithm map to $D_{P_\infty}(K)$ explicitly as a $g$-tuple of power series in $t$. 
Using~\eqref{eq-stcurveq}, we can expand $s$ as a power series in $t$ centered at $P_\infty$; this expansion has $\Z_2$-coefficients and is of the form $s = t^2 + 2ct^3 + \cdots$ for some $c \in \Z_2$. Substituting this expansion of $s$ into~\eqref{eq-omega1def}, we find that $\omega_1 = (1 + 0t + 0t^2 +2 c't^3 + \cdots)dt$ for some $c' \in \Z_2$, and hence that $\omega_j = (t^{2j-2} + 2c(j-1)t^{2j-1} + \cdots)dt$ for each $j \in \{1, \dots, g\}$. 
By the theory of Coleman integration \cite[\S II]{coleman-integrals}, if $R\in D_{P_{\infty}}(K)$ has coordinate $t \in \pi \mathcal{O}_K$, then $\log(R-P_{\infty})=\big(\hspace*{-1pt}\int_0^t\omega_1,\dots,\int_0^t\omega_g\big)$.

Denote the coordinates of the points $P, Q, \iota(Q) \in D_{P_{\infty}}(K)$ by $t_1, t_2, t_3 \in \pi \mathcal{O}_K$ respectively. 
Since $[Q+\iota(Q)-2P_{\infty}]=0$ in $J(\Q_2)$, we have
$\log(P+Q-2P_\infty) =\log(P-P_{\infty}) - \log(\iota(Q)-P_{\infty})
$, hence
\begin{equation} \label{eq-sumphi}
\log(P+Q-2P_\infty)  = \left((t_1 - t_3) + \cdots, \frac{t_1^3-t_3^3}{3} + \cdots, \dots, \frac{t_1^{2g-1} - t_3^{2g-1}}{2g-1} + \cdots\right).
\end{equation}
Dividing each component on the right-hand side of~\eqref{eq-sumphi} by $t_1 - t_3$, we deduce that $\rho \log(P+Q-2P_{\infty})$ is given by $\rho(1 + \cdots, \cdots)$; here, the ellipses are used to abbreviate terms of the form 
\begin{equation} \label{eq-sampleterm}
\frac{c''}{i+1} \cdot \big(t_1^i + t_1^{i-1}t_3 + \cdots + t_1 t_3^{i-1} + t_3^{i-1}\big),
\end{equation}
where $c'' \in \Z_2$ for all $i \geq 1$ and $c'' \in 2\Z_2$ when $i \in \{1, 3\}$. For $t_1,\,t_3 \in \pi \mc{O}_K$, the valuation of the expression~\eqref{eq-sampleterm} is always positive. We conclude that the map $\rho \log$ is constant on $D_{2P_\infty}(\Q_2)$, with value $(1 : 0 : \cdots : 0)$.

\medskip
\noindent \emph{Residue polydisk above $(0,\alpha) + (0,\alpha+1)$.} 
In this case, we use the model  \eqref{eq-egcurveq}.
Let $K \defeq \Q_4$ be the unramified quadratic extension of $\Q_2$. Any point in $D_{(0,\alpha) + (0,\alpha+1)}(\Q_2)$ is of the form $P + \ol{P}$, where $P \in C(K)$ reduces to $(0,\alpha)$ and $\ol{P}$ is the $\on{Gal}(K/\Q_2)$-conjugate of $P$. Let $\gamma\in \O_K$ be an element with $\gamma^2 + \gamma = h(0)$; such an element exists by Hensel's lemma and reduces to $\alpha\in \F_4$. 
Let $P_0 \defeq (0, \gamma) \in C(K)$. 
Then the $\on{Gal}(K/\Q_2)$-conjugate $\ol{P}_0 = (0, -1-\gamma) \in C(K)$ is in fact the image of $P_0$ under the hyperelliptic involution $\iota$.
The coordinate $x$ is a uniformizer at both $P_0$ and $\ol{P}_0$, giving identifications $D_{(0, \alpha)}(K) \simeq 2\mc{O}_K$ and $D_{(0, \alpha+1)}(K) \simeq 2\mc{O}_K$. 

By the general theory of hyperelliptic curves, the elements
\begin{equation} \label{eq-diffsinx}
    \eta_1 \defeq \frac{dx}{2y+1} = \frac{dy}{-h'(x)},
\end{equation}
and $\eta_j \defeq x^{j-1}\eta_1$ for $j\in \{2,\dots, g\}$ form a $\Z_2$-basis of $\HH^0(\mc{C}, \Omega^1_{\mc{C}/\Z_2}\big)$. Using~\eqref{eq-egcurveq}, we can expand $y$ as a power series in $x$ on $D_{(0, \alpha)}(K)$ centered at $P_0$; this expansion has $\mc{O}_K$-coefficients and constant term $\gamma$.
  Substituting this expansion of $y$ into~\eqref{eq-diffsinx}, we find a power series expansion of $\eta_1/dx$ with $\mc{O}_K$-coefficients and constant coefficient $(2\gamma+1) \in \O_K^{\times}$. 
  Now if $R\in D_{(0, \alpha)}(K)$ corresponds to $x\in 2\mathcal{O}_K$, then $\phi(x)\defeq \log(P - P_{\infty}) =  \log(P_0-P_{\infty}) + \big(\hspace*{-1pt}\int_0^x\eta_1,\dots,\int_0^x\eta_g\big)$. 

  Let the points $P \in D_{(0, \alpha)}(K)$ and $\ol{P}\in D_{(0, \alpha+1)}(K)$ have coordinates $x_1, x_2 \in 2\mathcal{O}_K$ respectively.  
  Then $x_2$ is the $\Gal(K/\Q_2)$-conjugate of $x_1$, and the hyperelliptic conjugate $\iota(\ol{P})\in D_{(0, \alpha)}(K)$ has coordinate $x_2$.
  Since the hyperelliptic involution acts as $-1$ on the Jacobian, we have $\log(P  +\ol{P} -2P_{\infty}) = \log(P-P_{\infty}) - \log(\iota(\ol{P}) - P_{\infty}) = \phi(x_1) - \phi(x_2)$, hence
\begin{equation} \label{eq-logonprod}
\log(P+ \ol{P} -2P_{\infty}) =  \left(\int_0^{x_1} \eta_1 - \int_0^{x_2} \eta_1, \dots, \int_0^{x_1} \eta_g - \int_0^{x_2} \eta_g\right).
\end{equation}
Each difference of integrals in the $g$-tuple on the right-hand side of~\eqref{eq-logonprod} is a sum of terms of the form
\begin{equation*}
    \frac{c}{i+1} \cdot (x_1^{i+1} - x_2^{i+1})
\end{equation*}
where $c \in \Z_2$.
For each $j \in \{1,\dots, g\}$, the smallest value of $i$ occurring in the $j^{\mathrm{th}}$ component is exactly $j-1$, in which case $c =2\gamma+1$ lies in $\Z_2^{\times}$. Dividing each component on the right-hand side of~\eqref{eq-logonprod} by $(2\gamma+1)\cdot (x_1 - x_2)$, we deduce that $\rho\log(P+\ol{P}-2P_{\infty}) = \rho(1+\cdots, (x_1+x_2)/2+\cdots, \cdots)$, where all the power series in the latter $g$-tuple are sums of terms of the form
\begin{equation} \label{eq-thegeneralform2}
    \frac{c}{i+1} \cdot (x_1^{i} + x_1^{i-1}x_2 + \cdots + x_1x_2^{i-1} + x_2^{i}),
\end{equation}
and where once again, $c \in \Z_2$ and for each $j \in \{1,\dots, g\}$, the smallest value of $i$ occurring in the $j^{\mathrm{th}}$ component is $j-1$. 
For $x_1, \, x_2 \in 2\mc{O}_K$, the valuation of the expression~\eqref{eq-thegeneralform2} is positive so long as $i \geq 2$. 
Therefore $\rho\log(P + \ol{P} -2P_{\infty}) = \rho(1,(x_1+x_2)/2,0,\cdots,0)$.
%
We conclude that, in the basis $\eta_1, \dots, \eta_g$, we have $\rho \log(D_{(0, \alpha)+(0, \alpha+1)}(\Q_2)) = \{(1 : 0 : 0 : \cdots : 0),(1 : 1 : 0 : \cdots : 0)\}$.

\medskip
\noindent \emph{Residue polydisk above $(1,\alpha) + (1,\alpha+1)$.} Under the transformation $x \mapsto x-1$, the conditions on $h$ in the proposition are preserved. Thus, the analysis for this case is identical to that of the previous case.

\medskip
Putting the three cases together, we obtain $\#\rho\log(X(\Q_2)) \leq 1+2+2 = 5$.
To show that $\rho\log(X(\Q_2))\subset \P^{g-1}(\F_2)$ is locally constant as $h$ varies in $U_g$, it suffices to show this on each residue polydisk separately. 
This is obvious for $D_{2P_\infty}(\Q_2)$.
For the residue disk $D_{(0, \alpha) + (0, \alpha+1)}(\Q_2)$, there exists a matrix $A_h \in \GL_g(\Z_2)$ that transforms the basis $\eta_1, \dots, \eta_g$ of $\HH^0(\mathcal{C}, \Omega^1_{\mathcal{C}/\Z_2})$ into $\omega_1, \dots, \omega_g$.
Since the association $h\mapsto A_h$ is continuous and $\rho \log (D_{(0, \alpha) + (0, \alpha+1)}(\Q_2))$ is constant with respect to the basis $\eta_1, \dots, \eta_g$, $\rho \log (D_{(0, \alpha) + (0, \alpha+1)}(\Q_2))$ is locally constant with respect to the basis $\omega_1, \dots, \omega_g$ as $h\in U_g$ varies.
The argument for $D_{(1,\alpha) + (1,\alpha+1)}(\Q_2)$ is analogous.
\end{proof}

Let $\mc{F}_g^{(2)}\subset \mc{F}_g$ be the subset of polynomials $f$ of the form $4^{2g+1}(h(x/4)+1/4)$ for some $h\in U_g \cap \Z[x]$.
Completing the square shows that monic odd hyperelliptic curves with equation $y^2 = f(x)$ with $f\in \mc{F}_g^{(2)}$ are isomorphic to hyperelliptic curves with of the form \eqref{eq-egcurveq} for some good $h\in U_g$.

\begin{lemma} \label{lem-bound2}
       The subfamily $\mc{F}_g^{(2)} \subset \mc{F}_g$ is defined by congruence conditions modulo $2^{4g+2}$ and has density $2^{-4g^2-6g-5}$.
\end{lemma}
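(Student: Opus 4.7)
The plan is to unpack the map $h \mapsto f = 4^{2g+1}(h(x/4) + 1/4)$ in coordinates, translate membership in $\mc{F}_g^{(2)}$ into explicit congruence conditions on $(a_1, \dots, a_{2g+1})$, count the valid residue classes, and conclude via standard equidistribution of integer points ordered by height.

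Writing $h(x) = x^{2g+1} + \sum_{i=1}^{2g+1} c_i x^{2g+1-i}$, a direct expansion yields $a_i = 4^i c_i$ for $i \in \{1, \dots, 2g\}$ and $a_{2g+1} = 4^{2g+1} c_{2g+1} + 4^{2g}$. Hence $f \in \mc{F}_g^{(2)}$ iff (i) the recovered $c_i$'s are integers (so $a_i \in 4^i\Z$ for $i \le 2g$ and $a_{2g+1} \in 4^{2g} + 4^{2g+1}\Z$) and (ii) the mod-$2$ reduction $\bar h \in \F_2[x]$ satisfies the goodness condition, which prescribes each $c_i \bmod 2$. Both conditions together are congruences on each $a_i$ modulo at most $2 \cdot 4^i$, all of which are captured by the modulus $2^{4g+2}$ asserted in the lemma.

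To count the valid mod-$2$ patterns of $\bar h$, I would partition $\{0, 1, \dots, 2g+1\}$ by residue mod $3$ into $R_0, R_1, R_2$ of sizes $n_0, n_1, n_2$, and count the subsets $S \subset \{0, \dots, 2g+1\}$ containing $2g+1$ (automatic) and satisfying the three goodness constraints: $0 \in S$, $\#S_0$ even, and $\#(S_1 \sqcup S_2)$ odd. A short case analysis, split on whether $2g+1 \in R_0$ or $2g+1 \in R_1 \sqcup R_2$, shows the count is exactly $2^{2g-2}$ for $g \ge 2$. The density of $\mc{F}_g^{(2)}$ then equals the number of valid patterns times the product of per-coordinate densities of allowed residues:
\[
2^{2g-2} \cdot \prod_{i=1}^{2g} 2^{-(2i+1)} \cdot 2^{-(4g+3)} = 2^{2g - 2 - (4g^2+4g) - (4g+3)} = 2^{-4g^2-6g-5}.
\]

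The main technical point requiring care is the combinatorial count of good $\bar h$: one must verify that the three parity constraints cut the $2^{2g+1}$ monic polynomials of degree $2g+1$ in $\F_2[x]$ by exactly $2^3$. This requires a brief independence check (splitting into the two cases of where $2g+1$ lands mod $3$, and noting that the subsets of $R_0$ containing the required elements and those of $R_1 \sqcup R_2$ can be chosen independently with the appropriate parities); it goes through cleanly for $g \ge 2$, though degenerate situations with very small $n_0$ or $n_1 + n_2$ would have to be ruled out.
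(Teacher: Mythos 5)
Your proof is correct and follows essentially the same route as the paper's, just phrased in explicit coordinates rather than via a determinant. The paper states that $U_g$ has Haar measure $1/8$ in $P_g$ (leaving the "combinatorial calculation" to the reader) and then multiplies by $4^{-(2g^2+3g+1)}$, the reciprocal of the determinant of the $\Z_2$-linear map $(c_1,\dots,c_{2g+1})\mapsto (4c_1,\dots,4^{2g+1}c_{2g+1})$; your product $\prod_{i=1}^{2g} 2^{-(2i+1)}\cdot 2^{-(4g+3)}$ is exactly this determinant factor unpacked, and your count $2^{2g-2}$ of good mod-$2$ patterns out of $2^{2g+1}$ is exactly the $1/8$. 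A virtue of your version is that you actually sketch the parity case analysis establishing the $2^{2g-2}$ count, which the paper omits; the case split on the residue of $2g+1 \bmod 3$, together with the independence of the two parity constraints on $S\cap R_0$ and $S\cap(R_1\sqcup R_2)$, is the right argument, and the edge cases are genuinely unproblematic for $g\geq 2$. One small inaccuracy worth flagging (present in the lemma statement itself, so not really your error): the constraint on $a_{2g+1}$ pins down a residue class modulo $2\cdot 4^{2g+1} = 2^{4g+3}$, not $2^{4g+2}$, since goodness forces $c_{2g+1}$ to be odd. This does not affect the density computation, which is what actually matters downstream.
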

\begin{proof}
\nc{\wt}{\widetilde}
    Equip the parameter space $P_g = \Z_2^{2g+1}$ of all monic polynomials in $\Z_2[x]$ of degree $2g+1$ with the probability Haar measure. 
    A combinatorial calculation shows that the open subset of good polynomials $U_g\subset P_g$ has measure $1/8$. 
    On the other hand, the $\Z_2$-linear map $h\mapsto 4^{2g+1}h(x/4)$ on $P_g$ has determinant $4  =4^{2g^2+3g+1}$, so the subset of $f\in P_g$ of curves of the form $4^{2g+1}h(x/4) + 4^{2g}$ for some $h\in P_g$ has measure $4^{-2g^2-3g-1}$.
    We conclude that the measure of the subset $\{4^{2g+1}h(x/4)+4^{2g} \mid h\in U_g\}\subset P_g$ is $(1/8) \times 4^{-2g^2-3g-1} = 2^{-4g^2-6g-5}$, so the density of $\mathcal{F}_g^{(2)}$ is also $2^{-4g^2-6g-5}$.
\end{proof}

\section{Proof of Theorem~\ref{thm-main}} \label{sec-exp}

The proof of Theorem \ref{thm-main} is completed by the next proposition.
Just as in \cite{PoonenStoll-Mosthyperellipticnorational}, a crucial ingredient in the proof is an equidistribution result of Bhargava--Gross \cite[Theorem 12.4]{bhargavagross-2selmer}.
If $S\subset T\subset \mathcal{F}_g$ are subsets, the relative density of $S$ in $T$ is by definition the quantity 
$\lim_{X\rightarrow \infty} \#(S \cap \mathcal{F}_{g,X}) /\#(T \cap \mathcal{F}_{g,X})$,
whenever it exists. 
Define the relative lower and upper density of $S$ in $T$ by replacing the limit by $\liminf$ and $\limsup$, respectively.
\begin{proposition}
    The relative lower density of the set
    \begin{align}\label{equation: subset of good curves with no points}
        \{ C\in \mathcal{F}_g^{(2)} \mid C \text{ has no unexpected rational or quadratic points}\}
    \end{align}
    in $\mc{F}_g^{(2)}$ is at least $1-6\times 2^{1-g}$.
    Consequently, if $g\geq 4$, a positive proportion of curves in $\mathcal{F}_g$ have no unexpected rational or quadratic points.
\end{proposition}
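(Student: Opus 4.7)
The plan is to verify the three hypotheses of Corollary~\ref{corollary: chabauty criterion for no unexpected points} at $p=2$ for a large proportion of curves in $\mathcal{F}_g^{(2)}$. The third hypothesis $X(\Q_2)\cap J(\Q_2)[2'] = \{0\}$ holds automatically by Lemma~\ref{lem-goodness}$(c)$. Since $J(\Q_2)[2] = 0$ by Lemma~\ref{lem-goodness}$(a)$, the induced map $\log\otimes \F_2\colon J(\Q_2)/2J(\Q_2)\to \F_2^g$ is an isomorphism, so injectivity of $\sigma$ is equivalent to requiring that no nonzero $s\in \Sel_2 J$ satisfies $\sigma(s) = 0$. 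Combining this with the second hypothesis, the two conditions together amount to the requirement that no nonzero $s\in \Sel_2 J$ maps under $\sigma$ into the ``bad set''
\[ B_C \defeq \{0\}\cup \{v\in \F_2^g\smallsetminus\{0\} : \mathbb{P}(v)\in \rho\log(X(\Q_2))\}. \]
Since $\F_2^{\times}$ has a single element, the second piece has the same cardinality as $\rho\log(X(\Q_2))$, so Proposition~\ref{prop-boundsym2} gives $\#B_C\leq 6$, and the same proposition tells us that $B_C$ is locally constant as $C$ varies in $\mathcal{F}_g^{(2)}$.

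Next, since $U_g$ is an open and closed subset of the compact space $P_g = \Z_2^{2g+1}$ and since $B_C$ takes values in the finite set $2^{\F_2^g}$, the local constancy lets us partition $\mathcal{F}_g^{(2)}$ into finitely many subfamilies, each defined by congruence conditions and on each of which $B_C$ equals a fixed set $B$. On each such subfamily I would apply the Bhargava--Gross equidistribution theorem~\cite[Theorem~12.4]{bhargavagross-2selmer}: for every fixed $v\in \F_2^g\cong J(\Q_2)/2J(\Q_2)$, the average over curves of height at most $X$ of the number of nonzero $s\in \Sel_2 J$ satisfying $\sigma(s) = v$ tends to $2/2^g$ as $X\to\infty$. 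Summing over the at most six elements of $B$ shows that the expected number of nonzero $s\in \Sel_2 J$ with $\sigma(s)\in B$ on each subfamily is at most $12/2^g$; combining over the finitely many pieces, the same bound holds on all of $\mathcal{F}_g^{(2)}$.

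By Markov's inequality, the relative upper density in $\mathcal{F}_g^{(2)}$ of curves admitting some nonzero $s\in \Sel_2 J$ with $\sigma(s)\in B_C$ is at most $12/2^g = 6\cdot 2^{1-g}$; by Corollary~\ref{corollary: chabauty criterion for no unexpected points}, the relative lower density of the set~\eqref{equation: subset of good curves with no points} is therefore at least $1 - 6\cdot 2^{1-g}$, proving the first assertion. For $g\geq 4$ this lower bound is at least $1/4 > 0$, and combining with Lemma~\ref{lem-bound2} --- which says that $\mathcal{F}_g^{(2)}$ itself has positive density in $\mathcal{F}_g$ --- yields the second assertion.

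The most delicate step is ensuring that the Bhargava--Gross equidistribution holds with the same limiting count $2/2^g$ per coset on each of the congruence subfamilies obtained from the partition by $B_C$; this should be a standard consequence of the fact that their methods accommodate arbitrary congruence conditions at any finite set of primes and give the same limiting distribution on any such subfamily. Everything else --- local constancy of $B_C$, compactness of $U_g$, linearity of expectation, and Markov's inequality --- is essentially bookkeeping.
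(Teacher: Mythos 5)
Your proposal is correct and follows essentially the same route as the paper's proof: partition $\mathcal{F}_g^{(2)}$ into finitely many congruence subfamilies on which $\rho\log(X(\Q_2))$ is constant, apply the Bhargava--Gross equidistribution theorem to bound the average number of nonzero Selmer elements landing in the bad set, use a Markov-type argument to convert the average bound into an upper density bound, verify the third hypothesis of Corollary~\ref{corollary: chabauty criterion for no unexpected points} via Lemma~\ref{lem-goodness}$(c)$, and finally multiply by the density of $\mathcal{F}_g^{(2)}$ from Lemma~\ref{lem-bound2}. The only cosmetic difference is that you make the use of Markov's inequality and the bad set $B_C$ explicit, where the paper phrases it as applying the Bhargava--Gross average-count bound directly to the preimage $I = \P^{-1}(\rho\log(X(\Q_2)))\cup\{0\}$; the underlying estimate and bookkeeping are identical.
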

\begin{proof}
    This is a standard adaptation of \cite[Proposition~8.13]{PoonenStoll-Mosthyperellipticnorational} to our setting.
    Proposition \ref{prop-boundsym2} shows that $\mathcal{F}_g^{(2)}$ is partitioned into finitely many subsets $\mathcal{G}_1, \dots, \mathcal{G}_m$, each defined by congruence conditions modulo a power of $2$, such that the subset $\rho\log(X(\Q_2))\subset \P^{g-1}(\F_2)$ is constant when $h$ varies in $\mathcal{G}_i$.
    Fix such an $i\in \{1,\dots,m\}$.
    If $C\in \mathcal{G}_i$, then $J(\Q_2)[2]=0$ by Lemma \ref{lem-goodness}$(a)$, so the logarithm map defined using the basis \eqref{eq-omega1def} induces an isomorphism $J(\Q_2)/2J(\Q_2) \simeq\F_2^g$.
    Write $\sigma \colon \Sel_2J\rightarrow \F_2^g$ for the corresponding composition $\Sel_2 J\rightarrow J(\Q_2)/2J(\Q_2)\rightarrow \F_2^g$ considered in \S\ref{sec-crit}.
    Bhargava--Gross have shown~\cite[Theorem~12.4]{bhargavagross-2selmer} that nontrivial elements of $\Sel_2J$ equidistribute under $\sigma$: when $C$ varies in $\mathcal{G}_i$ and $I\subset \F_2^g$ is a subset, they show that the average size of $\{s \in \Sel_2J\setminus \{0\} \colon  \sigma(s) \in I\}$ is at most $\#I\times 2^{1-g}$. 
    It follows that the relative upper density of curves $C\in \mathcal{G}_i$ such that $\{s \in \Sel_2J \setminus \{0\}\colon \sigma(s) \in I\}$ is nonempty is at most $\#I\times 2^{1-g}$.
    Applying this to $I = \P^{-1}(\rho\log(X(\Q_2))\cup \{0\})$ and using Proposition \ref{prop-boundsym2}, we see that the relative lower density of curves $C\in \mathcal{G}_i$ such that the first two conditions of Corollary \ref{corollary: chabauty criterion for no unexpected points} are satisfied is at least $1-\#I\times 2^{1-g} \geq 1-6\times 2^{1-g}$.
    The third condition of Corollary \ref{corollary: chabauty criterion for no unexpected points} is always satisfied for $C\in \mathcal{F}_g^{(2)}$ by Lemma \ref{lem-goodness}$(c)$.
    We conclude that the intersection of the set \eqref{equation: subset of good curves with no points} with $\mathcal{G}_i$ has relative lower density at least $1-6\times 2^{1-g}$ in $\mathcal{G}_i$.
    Summing over $i$, the set \eqref{equation: subset of good curves with no points} itself has relative lower density at least $1-6\times 2^{1-g}$ in $\mathcal{F}_g^{(2)}$.
    
    Thus, the lower density of curves in $\mathcal{F}_g$ that have no unexpected rational or quadratic points is at least $(1-6\times 2^{1-g})\times (\text{density of } \mathcal{F}_g^{(2)}) = (1-6\times 2^{1-g}) \times 2^{-4g^2-6g-5}$ by Lemma \ref{lem-bound2}.
    This is positive when $g\geq 4$. 
\end{proof}

\bibliographystyle{alpha}

\begin{thebibliography}{ACGH85}

\bibitem[ACGH85]{arbarellocornalbagriffithsharris-geometryalgebraiccurves}
E.~Arbarello, M.~Cornalba, P.~A. Griffiths, and J.~Harris.
\newblock {\em Geometry of algebraic curves. {V}ol. {I}}, volume 267 of {\em
  Grundlehren der mathematischen Wissenschaften [Fundamental Principles of
  Mathematical Sciences]}.
\newblock Springer-Verlag, New York, 1985.

\bibitem[BG13]{bhargavagross-2selmer}
{M}. Bhargava and {B}.~{H}. Gross.
\newblock The average size of the 2-{S}elmer group of {J}acobians of
  hyperelliptic curves having a rational {W}eierstrass point.
\newblock In {\em Automorphic representations and {$L$}-functions}, volume~22
  of {\em Tata Inst. Fundam. Res. Stud. Math.}, pages 23--91. Tata Inst. Fund.
  Res., Mumbai, 2013.

\bibitem[BGW17]{MR3600041}
M.~Bhargava, B.~H. Gross, and X.~Wang.
\newblock A positive proportion of locally soluble hyperelliptic curves over
  {$\Bbb Q$} have no point over any odd degree extension.
\newblock {\em J. Amer. Math. Soc.}, 30(2):451--493, 2017.
\newblock With an appendix by Tim Dokchitser and Vladimir Dokchitser.

\bibitem[Bha13]{thesource}
{M}. Bhargava.
\newblock Most hyperelliptic curves over $\mathbb{Q}$ have no rational points.
\newblock Arxiv Preprint, available at \url{https://arxiv.org/abs/1308.0395v1},
  2013.

\bibitem[Bou98]{Bourbaki-liegroupsliealgebras}
{N}. Bourbaki.
\newblock {\em Lie groups and {L}ie algebras. {C}hapters 1--3}.
\newblock Elements of Mathematics (Berlin). Springer-Verlag, Berlin, 1998.
\newblock Translated from the French, Reprint of the 1989 English translation.

\bibitem[BSS21]{BSSpreprint}
{M}. Bhargava, {A}. Shankar, and {A} Swaminathan.
\newblock The second moment of the size of the $2$-{S}elmer group of elliptic
  curves.
\newblock Arxiv Preprint, available at
  \url{https://arxiv.org/abs/2110.09063v1}, 2021.

\bibitem[Col85]{coleman-integrals}
{R}.~{F}. Coleman.
\newblock Torsion points on curves and {$p$}-adic abelian integrals.
\newblock {\em Ann. of Math. (2)}, 121(1):111--168, 1985.

\bibitem[CP23]{CaroPasten-chabautysurfaces}
{J}. Caro and {H}. Pasten.
\newblock A {C}habauty-{C}oleman bound for surfaces.
\newblock {\em Invent. Math.}, 234(3):1197--1250, 2023.

\bibitem[Fal83]{MR718935}
G.~Faltings.
\newblock Endlichkeitss\"{a}tze f\"{u}r abelsche {V}ariet\"{a}ten \"{u}ber
  {Z}ahlk\"{o}rpern.
\newblock {\em Invent. Math.}, 73(3):349--366, 1983.

\bibitem[Fal91]{Faltings-diophantineapproximationabelianvars}
{G}. Faltings.
\newblock Diophantine approximation on abelian varieties.
\newblock {\em Ann. of Math. (2)}, 133(3):549--576, 1991.

\bibitem[GM19]{gunther2019irrational}
{J}. Gunther and {J}.~{S}. Morrow.
\newblock Irrational points on random hyperelliptic curves.
\newblock Arxiv Preprint, available at
  \url{https://arxiv.org/abs/1709.02041v3}, 2019.

\bibitem[Gra07]{MR2340106}
{A}. Granville.
\newblock Rational and integral points on quadratic twists of a given
  hyperelliptic curve.
\newblock {\em Int. Math. Res. Not. IMRN}, (8):Art. ID 027, 24, 2007.

\bibitem[KRZB16]{MR3566201}
{E}. Katz, {J}. Rabinoff, and {D}. Zureick-Brown.
\newblock Uniform bounds for the number of rational points on curves of small
  {M}ordell-{W}eil rank.
\newblock {\em Duke Math. J.}, 165(16):3189--3240, 2016.

\bibitem[Lag22]{MR4471040}
{J}. Laga.
\newblock The average size of the 2-{S}elmer group of a family of
  non-hyperelliptic curves of genus 3.
\newblock {\em Algebra Number Theory}, 16(5):1161--1212, 2022.

\bibitem[Par16]{Par16}
{J}. Park.
\newblock Effective {C}habauty for symmetric powers of curves.
\newblock Arxiv Preprint, available at
  \url{https://arxiv.org/abs/1606.05195v1}, 2016.

\bibitem[Poo06]{MR2293593}
{B}. Poonen.
\newblock Heuristics for the {B}rauer-{M}anin obstruction for curves.
\newblock {\em Experiment. Math.}, 15(4):415--420, 2006.

\bibitem[PS14]{PoonenStoll-Mosthyperellipticnorational}
{B}. Poonen and {M}. Stoll.
\newblock Most odd degree hyperelliptic curves have only one rational point.
\newblock {\em Ann. of Math. (2)}, 180(3):1137--1166, 2014.

\bibitem[PV04]{MR2029869}
{B}. Poonen and {J}.~{F}. Voloch.
\newblock Random {D}iophantine equations.
\newblock In {\em Arithmetic of higher-dimensional algebraic varieties ({P}alo
  {A}lto, {CA}, 2002)}, volume 226 of {\em Progr. Math.}, pages 175--184.
  Birkh\"{a}user Boston, Boston, MA, 2004.
\newblock With appendices by Jean-Louis Colliot-Th\'{e}l\`ene and Nicholas M.
  Katz.

\bibitem[RT21]{MR4258170}
{B}. Romano and {J}.~{A}. Thorne.
\newblock {$E_8$} and the average size of the 3-{S}elmer group of the
  {J}acobian of a pointed genus-2 curve.
\newblock {\em Proc. Lond. Math. Soc. (3)}, 122(5):678--723, 2021.

\bibitem[Sik09]{Sik09}
{S}. Siksek.
\newblock Chabauty for symmetric powers of curves.
\newblock {\em Algebra Number Theory}, 3(2):209--236, 2009.

\bibitem[Sto09]{stoll2009average}
{M}. Stoll.
\newblock On the average number of rational points on curves of genus 2.
\newblock Arxiv Preprint, available at \url{https://arxiv.org/abs/0902.4165v1},
  2009.

\bibitem[Sto17]{Stoll-ChabautywithoutMordellWeil}
{M}. Stoll.
\newblock Chabauty without the {M}ordell-{W}eil group.
\newblock In {\em Algorithmic and experimental methods in algebra, geometry,
  and number theory}, pages 623--663. Springer, Cham, 2017.

\bibitem[Sto19]{MR3908770}
{M}. Stoll.
\newblock Uniform bounds for the number of rational points on hyperelliptic
  curves of small {M}ordell-{W}eil rank.
\newblock {\em J. Eur. Math. Soc. (JEMS)}, 21(3):923--956, 2019.

\bibitem[SW18]{MR3719247}
A.~Shankar and X.~Wang.
\newblock Rational points on hyperelliptic curves having a marked
  non-{W}eierstrass point.
\newblock {\em Compos. Math.}, 154(1):188--222, 2018.

\bibitem[VW21]{MR4309843}
{S}. Vemulapalli and {D}. Wang.
\newblock Uniform bounds for the number of rational points on symmetric squares
  of curves with low {M}ordell-{W}eil rank.
\newblock {\em Acta Arith.}, 199(4):331--359, 2021.

\bibitem[Zar96]{Zarhin-logarithmmap}
Yu.~G. Zarhin.
\newblock {$p$}-adic abelian integrals and commutative {L}ie groups.
\newblock volume~81, pages 2744--2750. 1996.
\newblock Algebraic geometry, 4.

\end{thebibliography}

\end{document}